\newtheorem{theorem}{Theorem}[section]
\newtheorem{proposition}[theorem]{Proposition}
\newtheorem{corollary}[theorem]{Corollary}
\newtheorem{lemma}[theorem]{Lemma}
\begin{document}

\title{Classification of modules over laterally complete regular algebras}

\author{Vladimir I. Chilin, Jasurbek A. Karimov}

\date{}

\maketitle

\begin{abstract}
Let $\mathcal{A}$ be a laterally complete commutative regular algebra and $X$ be a laterally complete $\mathcal{A}$-module. In this paper we introduce a notion of passport $\Gamma(X)$ for $X$, which consist of uniquely defined partition of unity in the Boolean algebra of idempotents in $\mathcal{A}$ and the set of pairwise different cardinal numbers. It is proved that $\mathcal{A}$-modules $X$ and $Y$ are isomorphic if and only if $\Gamma(X) = \Gamma(Y)$.
\end{abstract}

\emph{Key words:} Commutative regular algebra, Homogeneous module, Finite dimensional module

\emph{Mathematics Subject Classification (2000):} MSC 13C05, MSC 16D70

\section{Introduction}
\label{intro}
J.~Kaplansky \cite{lit8} introduced a class of $AW^\ast$-algebras to describe $C^\ast$-algebras, which is close to von Neumann algebras by their algebraic and order structure. The class of $AW^\ast$-algebras became a subject of many researches in the operator theory (see review in \cite{lit7}). One of the important results in this direction is the realization of an arbitrary $AW^\ast$-algebra $M$ of type $I$ as a $\ast$-algebra of all linear bounded operators, which act in a special Banach module over the center $Z(M)$ of the algebra $M$ \cite{lit9}. The Banach $Z(M)$-valued norm in this module is generated by the scalar product with values in the commutative $AW^\ast$-algebra $Z(M)$. Later, these modules were called Kaplansky-Hilbert modules (KHM). Detailed exposition of many useful properties of KHM is given, for example, in (\cite{lit5}, 7.4). One of the important properties is a representation of an arbitrary Kaplansky-Hilbert module as a direct sum of homogeneous KHM (\cite{lit10}, \cite{lit5}, 7.4.7).

Development of the noncommutative integration theory stimulated an interest to the different classes of algebras of unbounded operators, in particular, to the $\ast$-algebras $LS(M)$ of locally measurable operators, affiliated with von Neumann algebras or $AW^\ast$-algebras $M$. If $M$ is a von Neumann algebra, then the center $Z(LS(M))$ in the algebra $LS(M)$ identifies with the algebra $L^0(\Omega, \Sigma, \mu)$ of all classes of equal almost everywhere measurable complex functions, defined on some measurable space $(\Omega, \Sigma, \mu)$ with a complete locally finite measure $\mu$ (\cite{lit6}, 2.1, 2.2). If $M$ is an $AW^\ast$-algebra, then $Z(LS(M))$ is an extended $f$-algebra $C_\infty(Q)$, where $Q$ is the Stone compact coresponding to the Boolean algebra of central projectors in $M$ \cite{lit7}. The problem (like the one in the work of J.~Kaplansky \cite{lit9} for $AW^\ast$-algebras) on possibility of realization of $\ast$-algebras $LS(M)$, in the case, when $M$ has the type $I$, as $\ast$-algebras of linear $L^0(\Omega, \Sigma, \mu)$-bounded (respectively, $C_\infty(Q)$-bounded) operators, which act in corresponding KHM over the $L^0(\Omega, \Sigma, \mu)$ or over the $C_\infty(Q)$ naturally arises. In order to solve this problem it is necessary to construct corresponding theory of KHM over the algebras $L^0(\Omega, \Sigma, \mu)$ and $C_\infty(Q)$. In a particular case of KHM over the algebras $L^0(\Omega, \Sigma, \mu)$ this problem is solved in \cite{lit3}, where the decomposition of KHM over $L^0(\Omega, \Sigma, \mu)$ as a direct sum of homogeneous KHM is given. Similar decomposition as a direct sum of strictly $\gamma$-homogeneous modules is given in the paper \cite{lit7a} for arbitrary regular laterally complete modules over the algebra $C_\infty(Q)$ (the definitions see in the Section 3 below).

The algebra $C_\infty(Q)$ is an example of a commutative unital regular algebra over the field of real numbers. In this algebra the following property of lateral completeness holds: for any set $\{a_i\}_{i \in I}$ of pairwise disjoint elements in $C_\infty(Q)$ there exists an element $a \in C_\infty(Q)$ such that $as(a_i) = a_i$ for all $i \in I$, where $s(a_i)$ is a support of the element $a_i$ (the definitions see in the Section 2 below). This property of $C_\infty(Q)$ plays a crucial role in classification of regular laterally complete $C_\infty(Q)$-mpdules \cite{lit7a}. Thereby, it is natural to consider the class of laterally complete commutative unital regular algebras $\mathcal{A}$ over arbitrary fields and to obtain variants of structure theorems for modules over such algebras. Current work is devoted to solving this problem. For every faithful regular laterally complete $\mathcal{A}$-module $X$ the concept of passport $\Gamma(X)$, which consist of the uniquely defined partition of unity in the Boolean algebra of idempotents in $\mathcal{A}$ and the set of pairwise different cardinal numbers is constructed. It is proved, that the equality of passports $\Gamma(X)$ and $\Gamma(Y)$ is necessary and sufficient condition for isomorphism of $\mathcal{A}$-modules $X$ and $Y$.

\section{Laterally complete commutative regular algebras}
\label{sec:1}
Let $\mathcal{A}$ be a commutative algebra over the field $K$ with the unity $\mathbf{1}$ and $\nabla=\{e\in\mathcal{A}: e^2=e\}$ be a set of all idempotents in $\mathcal{A}$. For all $e, f \in\nabla$ we write $e \leq f$ if $ef=e$. It is well known (see, for example \cite[Prop.~1.6]{lit11}) that this binary relation is partial order in $\nabla$ and $\nabla$ is a Boolean algebra with respect to this order. Moreover, we have the following equalities: $e \vee f = e + f - ef$, $e \wedge f = e f$, $Ce = \mathbf{1} - e$ with respect to the lattice operations and the complement $Ce$ in $\nabla$.

The commutative unital algebra $\mathcal{A}$ is called regular if the following equivalent conditions hold \cite[\S2, item 4]{lit12}:

1. For any $a\in\mathcal{A}$ there exists $b\in\mathcal{A}$ such that $a=a^2b$;

2. For any $a\in\mathcal{A}$ there exists $e\in\nabla$ such that $a\mathcal{A}=e\mathcal{A}$.

A regular algebra $\mathcal{A}$ is a regular semigroup with respect to the multiplication operation \cite[Ch. I, \S 1.9]{lit13}. In this case all idempotents in $\mathcal{A}$ commute pairwisely. Therefore, $\mathcal{A}$ is a commutative inverse semigroup, i.e. for any $a\in\mathcal{A}$ there exists an unique element $i(a)\in\mathcal{A}$, which is an unique solution of the system: $a^2x=a$, $ax^2=x$ \cite[Ch. I, \S 1.9]{lit13}. The element $i(a)$ is called an inversion of the element $a$. Obviously, $ai(a)\in\nabla$ for any $a\in\mathcal{A}$. In this case the map $i:\mathcal{A}\rightarrow\mathcal{A}$ is a bijection and an automorphism (by multiplication) in semigroup $\mathcal{A}$. Moreover, $i(i(a))=a$ and $i(g)=g$ for all $a\in\mathcal{A}$, $g\in\nabla$.

Let $\mathcal{A}$ be a commutative unital regular algebra and $\nabla$ be a Boolean algebra of all idempotents in $\mathcal{A}$. Idempotent $s(a)\in\nabla$ is called the support of an element $a\in\mathcal{A}$ if $s(a)a=a$ and $ga=a$, $g\in\nabla$ imply $s(a) \leq g$. It is clear that $s(a)=ai(a)=s(i(a))$. In particular, $s(e)=ei(e)=e$ for any $e\in\nabla$.

It is easy to show that supports of elements in a commutative regular unital algebra $\mathcal{A}$ satisfy the following properties:

\begin{proposition}\label{art9_utv_2_2}
Let $a,b\in \mathcal{A}$, then

(i). $s(ab) = s(a)s(b)$, in particular, $ab = 0 \Leftrightarrow s(a)s(b) = 0$;

(ii). If $ab = 0$, then $i(a+b) = i(a) + i(b)$ and $s(a+b) = s(a) + s(b)$.
\end{proposition}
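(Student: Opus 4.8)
The plan is to obtain part (i) directly from the definition $s(c) = c\,i(c)$ together with multiplicativity of $i$, and to reduce part (ii) to the uniqueness of the inverse. For (i), I would write $s(ab) = ab\,i(ab)$ and use that $i$ is an automorphism by multiplication, so $i(ab) = i(a)i(b)$; commutativity then regroups this as $(a\,i(a))(b\,i(b)) = s(a)s(b)$. For the displayed equivalence, I first note that $i(0) = 0$ (the defining equation $0 \cdot x^2 = x$ forces $x = 0$), hence $s(0) = 0$; so $ab = 0$ gives $s(a)s(b) = s(0) = 0$, while conversely $s(a)s(b) = 0$ gives $s(ab) = 0$, whence $ab = s(ab)(ab) = 0$.

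The engine for (ii) is the observation that when $ab = 0$ every cross product between the two blocks $\{a, i(a)\}$ and $\{b, i(b)\}$ vanishes. By (i) the hypothesis yields $s(a)s(b) = 0$; writing $a = s(a)\,a$ and $i(b) = s(b)\,i(b)$ (the latter since $s(i(b)) = s(b)$) and using commutativity, each of $a\,i(b)$, $i(a)\,b$, $i(a)\,i(b)$ factors through the idempotent $s(a)s(b) = 0$ and is therefore zero.

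Granting this, I would check that $x = i(a) + i(b)$ solves the defining system of $i(a+b)$. Expanding $(a+b)^2 x$ and $(a+b)x^2$, every mixed term carries one of the vanishing cross products (or the factor $ab$) and drops out, leaving $a^2 i(a) + b^2 i(b) = a + b$ and $a\,i(a)^2 + b\,i(b)^2 = i(a) + i(b) = x$, by the defining equations for $i(a)$ and $i(b)$. Uniqueness of the inverse then gives $i(a+b) = i(a) + i(b)$, and the support identity follows at once from $s(a+b) = (a+b)(i(a)+i(b)) = a\,i(a) + b\,i(b) = s(a) + s(b)$, the cross terms again vanishing.

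I expect the only genuine content to be the cross-product vanishing; once it is available, both parts are bookkeeping. The step needing care is the factorization of $a\,i(b)$, $i(a)\,b$, and $i(a)\,i(b)$ through $s(a)s(b)$ — that is, recalling $s(i(b)) = s(b)$ so that $i(b)$ carries the support $s(b)$ — since this is precisely what converts the disjointness $s(a)s(b) = 0$ into the annihilation of the mixed products.
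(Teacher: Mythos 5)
Your proof is correct, and since the paper leaves this proposition unproved (it is introduced with ``It is easy to show\dots''), your argument supplies exactly the standard reasoning the authors evidently intend: part (i) from the multiplicativity of the inversion $i$ (which the paper asserts when noting that $i$ is an automorphism by multiplication), and part (ii) by checking that $i(a)+i(b)$ solves the defining system $ (a+b)^2x=a+b$, $(a+b)x^2=x$ once the cross products $a\,i(b)$, $i(a)\,b$, $i(a)\,i(b)$ are killed via $s(i(b))=s(b)$ and $s(a)s(b)=0$. All the auxiliary facts you invoke ($i(0)=0$, $s(c)c=c$, $s(i(b))=s(b)$) are either stated in the paper or follow immediately from the defining equations, so the proof is complete as written.
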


Two elements $a$ and $b$ in a commutative unital regular algebra $\mathcal{A}$ are called \emph{disjoint elements}, if $ab = 0$, which equivalent to the equality $s(a)s(b) = 0$ (see Proposition \ref{art9_utv_2_2} (i)). If the Boolean algebra $\nabla$ of all idempotents in $\mathcal{A}$ is complete, $a\in\mathcal{A}$ and $r(a)=\sup\{e\in\nabla: ae = 0\}$, then
\begin{multline*}
s(a)r(a) = s(a) \wedge r(a) = s(a) \wedge (\sup\{e: ae = 0\})=\\
= \sup\{s(a) \wedge e: ae = 0\} = \sup\{s(a)e: ae = 0\} = 0.
\end{multline*}
Hence $s(a) \leq \mathbf{1} - r(a)$. If $q = (\mathbf{1} - r(a) - s(a))$, then $aq = as(a)q = 0$, thus $q \leq r(a)$. This yields that $q = 0$, i.e. $s(a) = \mathbf{1} - r(a)$. This implies the following

\begin{proposition}\label{art9_utv_2_25}
Let $\mathcal{A}$ be a commutative unital regular algebra and let $\nabla$ be complete Boolean algebra of idempotents in $\mathcal{A}$. If $\{e_i\}_{i \in I}$ is a partition of unity in $\nabla$, $a,b \in \mathcal{A}$ and $ae_i=be_i$ for all $i \in I$, then $a=b$.
\end{proposition}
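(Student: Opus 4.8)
The plan is to reduce the statement to showing that an element annihilated by every member of a partition of unity must be zero, and then to invoke the relation $s(a) = \mathbf{1} - r(a)$ derived in the displayed computation immediately preceding the statement. First I would set $c = a - b$. Since multiplication is distributive over addition and $ae_i = be_i$ for every $i \in I$, it follows that $c e_i = (a-b)e_i = ae_i - be_i = 0$ for all $i \in I$. Thus it suffices to prove that $c = 0$, for then $a = b$.

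Next I would bring in the complement element $r(c) = \sup\{e \in \nabla : ce = 0\}$, which exists because $\nabla$ is assumed to be a complete Boolean algebra. By the previous step each $e_i$ belongs to the set $\{e \in \nabla : ce = 0\}$, and hence $e_i \leq r(c)$ for every $i \in I$. Taking the supremum over $i$ and using that $\{e_i\}_{i \in I}$ is a partition of unity, so that $\sup_{i \in I} e_i = \mathbf{1}$, I obtain $\mathbf{1} = \sup_{i \in I} e_i \leq r(c)$, whence $r(c) = \mathbf{1}$.

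Now I would apply the identity $s(c) = \mathbf{1} - r(c)$, established just before the statement for any element of a regular algebra with complete $\nabla$, to conclude that $s(c) = \mathbf{1} - \mathbf{1} = 0$. Finally, the defining property of the support gives $c = s(c)\,c = 0 \cdot c = 0$, and therefore $a = b$, as required.

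I do not expect a genuine obstacle here: the argument is a direct application of the support–complement calculus already assembled in the excerpt, namely the existence of $r(c)$ for complete $\nabla$, the equality $s(c) = \mathbf{1} - r(c)$, and the identity $s(c)c = c$. The only point deserving a moment of care is the supremum step, where one must use that the order-join of the pairwise disjoint family $\{e_i\}_{i \in I}$ is exactly the lattice supremum in $\nabla$ and that this join equals $\mathbf{1}$ precisely because the family is a partition of unity; both facts are immediate from the definitions recalled at the start of the section.
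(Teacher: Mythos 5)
Your argument is correct and is essentially identical to the paper's own proof: the paper likewise sets $c=a-b$, observes $\mathbf{1}=\sup_{i\in I}e_i\leq r(a-b)$ so that $r(a-b)=\mathbf{1}$, and concludes $s(a-b)=0$, hence $a=b$. No gaps; the only addition you make is spelling out the final step $c=s(c)c=0$, which the paper leaves implicit.
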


\begin{proof}
Since $(a-b)e_i=0$ for any $i \in I$, then $\mathbf{1}=\sup\limits_{i \in I}e_i \leq r(a-b)$, i.e. $r(a-b)=\mathbf{1}$. Hence, $s(a-b)=0$, i.e. $a=b$.
\end{proof}

Commutative unital regular algebra $\mathcal{A}$ is called \emph{laterally complete} (\emph{$l$-complete}) if the Boolean algebra of its idempotents is complete and for any set $\{a_i\}_{i \in I}$ of pairwise disjoint elements in $\mathcal{A}$ there exists an element $a\in\mathcal{A}$ such that $as(a_i) = a_i$ for all $i \in I$. The element $a\in\mathcal{A}$ such that $as(a_i) = a_i$, $i \in I$, in general, is not uniquely determined. However, by Proposition \ref{art9_utv_2_25}, it follows that the element $a$ is unique in the case, when $\sup\limits_{i \in I}s(a_i) = \mathbf{1}$. In general case, due to the equality $as(a_i) = a_i = bs(a_i)$ for all $i \in I$ and $a,b\in\mathcal{A}$, it follows that $a\sup_{i \in I}s(a_i) = b\sup_{i \in I}s(a_i)$.

Let us give examples of $l$-complete and not $l$-complete commutative regular algebras. Let $\Delta$ be an arbitrary set and $K^\Delta$ be a Cartesian product of $\Delta$ copies of the field $K$, i.e. the set of all $K$-valued functions on $\Delta$. The set $K^\Delta$ is a commutative unital regular algebra with respect to pointwise algebraic operations, moreover, the Boolean algebra $\nabla$ of all idempotents in $K^\Delta$ is an isomorphic atomic Boolean algebra of all subsets in $\Delta$. In particular $\nabla$ is complete Boolean algebra. If $\{a_j=(\alpha^{(j)}_q)_{q\in\Delta}, j \in J\}$ is a family of pairwise disjoint elements in $K^\Delta$, then setting $\Delta_j = \{q\in\Delta: \alpha^{(j)}_q \neq 0\}$, $j \in J$ and $a=(\alpha_q)_{q\in\Delta} \in K^\Delta$, where $\alpha_q = \alpha^{(j)}_q$ for any $q\in\Delta_j$, $j \in J$, and $\alpha_q = 0$ for $q\in\Delta\setminus\bigcup\limits_{j \in J}\Delta_j$, we obtain that $as(a_j) = a_j$ for all $j \in J$. Hence, $K^\Delta$ is a $l$-complete algebra.

Now let $\mathcal{A}$ be an arbitrary commutative unital regular algebra over the field $K$ and $\nabla$ be a Boolean algebra of all idempotents in $\mathcal{A}$. An element $a\in\mathcal{A}$ is called a \emph{step element} in $\mathcal{A}$ if it has the following form $a = \sum_{k=1}^n\lambda_k e_k$, here $\lambda_k \in K$, $e_k\in\nabla$, $k=1,\dots,n$. The set $K(\nabla)$ of all step elements is the smallest subalgebra in $\mathcal{A}$, which contains $\nabla$. Any nonzero element $a = \sum_{k=1}^n \lambda_k e_k$ in $K(\nabla)$ can be represented as $a = \sum_{l=1}^m\alpha_l g_l$, here $g_l\in\nabla$, $g_l g_k = 0$ when $l \ne k$, $0 \ne \alpha_k \in K$, $l,k=1,\dots,m$. Setting $b = \sum_{l=1}^m\alpha_l^{-1} g_l \in K(\nabla)$, we obtain $a^2b=a$. Hence, $K(\nabla)$ is a regular subalgebra in $\mathcal{A}$. Since $\nabla \subset K(\nabla)$, the Boolean algebra of idempotents in $K(\nabla)$ coincides with $\nabla$. Assume that $\mathrm{card}\,(K) = \infty$ and $\mathrm{card}\,(\nabla) = \infty$. We choose a countable set $K_0 = \{\lambda_n\}_{n=1}^\infty$ of pairwise different nonzero elements in $K$ and a countable set $\{e_n\}_{n=1}^\infty$ of nonzero pairwise disjoint elements in $\nabla$. Let us consider a set $\{\lambda_n e_n\}_{n=1}^\infty$ of pairwise disjoint elements in $K(\nabla)$. Assume that there exists $b=\sum_{l=1}^m \alpha_l g_l\in K(\nabla)$, $0\ne\alpha_l \in K$, $g_l\in\nabla$, $g_l g_k = 0$ and $l \ne k$, $l,k=1,\dots,m$, such that $be_n=bs(\lambda_n e_n)=\lambda_n e_n$. In this case for any positive integer $n$ there exists natural number $l(n)$, such that $\alpha_{l(n)}g_{l(n)}e_n=\lambda_ng_{l(n)}e_n\neq 0$, i.e. $\alpha_{l(n)}=\lambda_n$. This implies that the set $\{\lambda_n\}_{n=1}^\infty$ is finite, which is not true. Hence, the commutative unital regular algebra $K(\nabla)$ is not $l$-complete.

Let $\nabla$ be complete Boolean algebra and let $Q(\nabla)$ be a Stone compact corresponding to $\nabla$. An algebra $C_\infty(Q(\nabla))$ of all continuous functions $a: Q(\nabla) \rightarrow [-\infty,+\infty]$, taking the values $\pm\infty$ only on nowhere dense sets in $Q(\nabla)$ \cite[1.4.2]{lit5}, is an important example of a $l$-complete commutative regular algebra.

An element $e\in C_\infty(Q(\nabla))$ is an idempotent if and only if $e(t) = \chi_V(t)$, $t\in Q(\nabla)$, for some clopen set $V \subset Q(\nabla)$, where
\[ \chi_V(t) = \left\{
\begin{array}{ll}
1, & t \in V\textrm{;}\\
0, & t \notin V\textrm{,}
\end{array} \right. \]
i.e. $\chi_V(t)$ is a characteristic function of the set $V$. In particular, the Boolean algebra $\nabla$ can be identified with the Boolean algebra of all idempotents in algebra $C_\infty(Q(\nabla))$.

If $a\in C_\infty(Q(\nabla))$, then $G(a) = \{t \in Q(\nabla): 0<|a(t)|<+\infty\}$ is open set in the Stone compact set $Q(\nabla)$. Hence, the closure $V(a) = \overline{G(a)}$ in $Q(\nabla)$ of the set $G(a)$ is an clopen set, i.e. $\chi_{V(a)}$ is an idempotent in the algebra $C_\infty(Q(\nabla))$. We consider a continuous function $b(t)$, given on the dense open set $G(a) \cup (Q(\nabla) \setminus V(a))$ and defines by the following equation
\[ b(t) = \left\{
\begin{array}{ll}
\frac{1}{a(t)}, & t \in G(a)\textrm{,}\\
0, & t \in Q(\nabla) \setminus V(a)\textrm{.}
\end{array} \right. \]
This function uniquely extends to a continuous function defined on $Q(\nabla)$ with values in $[-\infty,+\infty]$ \cite[Ch.5, \S 2]{lit14} (we also denote this extension by $b(t)$). Since $ab=\chi_{V(a)}$, then $a^2b = a$ and $s(a) = \chi_{V(a)}$. Hence, $C_\infty(Q(\nabla))$ is a commutative unital regular algebra over the field of real numbers $\mathbf{R}$. In this case, the Boolean algebra of all idempotents in $C_\infty(Q(\nabla))$ is complete.

It is known that (see, for example \cite[1.4.2]{lit5}) $C_\infty(Q(\nabla))$ is an extended complete vector lattice. In particular, for any set $\{a_j\}_{j\in J}$ of pairwise disjoint positive elements in $C_\infty(Q(\nabla))$ there exists the least upper bound $a=\sup_{j\in J}a_j$ and $as(a_j) = a_j$ for all $j \in J$. It follows that the commutative regular algebra $C_\infty(Q(\nabla))$ is laterally complete.

In the case, when $\nabla$ is a complete atomic Boolean algebra and $\Delta$ is the set of all atoms in $\nabla$, then $C_\infty(Q(\nabla))$ is isomorphic to the algebra $\mathbf{R}^\Delta$.

The following examples of laterally complete commutative regular algebras are variants of algebras $C_\infty(Q(\nabla))$ for any topological fields, in particular, for the field $\mathbf{Q}_p$ of $p$-adic numbers.

Let $K$ be an arbitrary field and $t$ be the Hausdorff topology on $K$. If operations $\alpha\rightarrow(-\alpha)$, $\alpha\rightarrow\alpha^{-1}$ and operations $(\alpha,\beta)\rightarrow\alpha+\beta$, $(\alpha,\beta)\rightarrow\alpha\beta$, $\alpha,\beta \in K$, are continuous with respect to this topology, we say that $(K,t)$ is a \emph{topological field} (see, for example, \cite[Ch.20, \S165]{lit15}).

Let $(K,t)$ be a topological field, $(X,\tau)$ be any topological space and $\nabla(X)$ be a Boolean algebra of all clopen subsets in $(X,\tau)$. A map $\varphi:(X,\tau)\rightarrow(K,t)$ is called \emph{almost continuous} if there exists a dense open set $U$ in $(X,\tau)$ such that the restriction $\varphi|_U: U \rightarrow (K,t)$ of the map $\varphi$ on the subset $U$ is continuous in $U$. The set of all almost continuous maps from $(X,\tau)$ to $(K,t)$ we denote by $AC(X,K)$.

We define pointwise algebraic operations in $AC(X,K)$ by
\[ (\varphi+\psi)(t) = \varphi(t)+\psi(t); \]
\[ (\alpha\varphi)(t) = \alpha\varphi(t); \]
\[ (\varphi\cdot\psi)(t) = \varphi(t)\psi(t) \]
for all $\varphi,\psi \in AC(X,K)$, $\alpha\in K$, $t\in X$.

Since an intersection of two dense open sets in $(X,\tau)$ is a dense open set in $(X,\tau)$, then $\varphi+\psi$, $\varphi\cdot\psi \in AC(X,K)$ for any $\varphi, \psi \in AC(X,K)$. Obviously, $\alpha\varphi \in AC(X,K)$ for all $\varphi \in AC(X,K)$, $\alpha \in K$. It can be easily checked that $AC(X,K)$ is a commutative algebra over $K$ with the unit element $\mathbf{1}(t) = 1_K$ for all $t \in X$, where $1_K$ is the unit element of $K$. In this case, the algebra $C(X,K)$ of all continuous maps from $(X,\tau)$ to $(K,t)$ is a subalgebra in $AC(X,K)$.

\begin{sloppypar}
In the algebra $AC(X,K)$ consider the following ideal
\[ I_0(X,K) = \{\varphi \in AC(X,K):\ \text{interior of preimage}\ \varphi^{-1}(0)\ \text{is dense in}\ (X,\tau)\}. \]
By $C_\infty(X,K)$ denote the quotient algebra $AC(X,K)/I_0(X,K)$ and by 
$$
\pi: AC(X,K) \rightarrow AC(X,K)/I_0(X,K)
$$ 
denote the corresponding canonical homomorphism.
\end{sloppypar}

\begin{theorem}\label{art9_teor_2_26}
The quotient algebra $C_\infty(X,K)$ is a commutative unital regular algebra over the field $K$. Moreover, if $(X,\tau)$ is a Stone compact set, then algebra $C_\infty(X,K)$ is laterally complete, and the Boolean algebra $\nabla$ of all its idempotents is isomorphic to the Boolean algebra $\nabla(X)$.
\end{theorem}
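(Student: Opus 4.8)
The plan is to prove the two assertions separately: regularity holds for an arbitrary topological space $X$, while lateral completeness and the identification of the idempotents use that $X$ is a Stone compact, in particular that it is extremally disconnected (the closure of every open set is clopen). For \textbf{regularity}, given $a \in C_\infty(X,K)$ I would pick a representative $\varphi \in AC(X,K)$, continuous on a dense open set $U$. Since $(K,t)$ is Hausdorff, $\{0\}$ is closed, so $V = \{x \in U : \varphi(x) \neq 0\}$ is open, and on $V$ the map $x \mapsto \varphi(x)^{-1}$ is continuous because inversion is continuous on $K \setminus \{0\}$. Setting $\psi(x) = \varphi(x)^{-1}$ on $V$ and $\psi(x) = 0$ otherwise, the key check is that $\psi$ is continuous on the open set $V \cup \operatorname{int}(\varphi^{-1}(0))$ and that this set is dense: $V$ is dense in its closure, while $U \setminus \overline{V}$ is an open subset of $\varphi^{-1}(0)$, so the union is dense in $U$ and hence in $X$. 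Thus $\psi \in AC(X,K)$, and $\varphi - \varphi^2\psi$ vanishes on the dense open set $V \cup \operatorname{int}(\varphi^{-1}(0))$, so it lies in $I_0(X,K)$; with $b = \pi(\psi)$ this gives $a = a^2 b$. Commutativity and unitality pass to the quotient, and $\mathbf{1} \notin I_0(X,K)$, so $C_\infty(X,K)$ is a commutative unital regular algebra.

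Now assume $X$ is a Stone compact. To describe the \textbf{idempotents}, I would show that $W \mapsto \pi(\chi_W)$ is a Boolean homomorphism $\nabla(X) \to \nabla$ via $\chi_{W \cap W'} = \chi_W \chi_{W'}$, $\chi_{W \cup W'} = \chi_W + \chi_{W'} - \chi_W\chi_{W'}$ and $\chi_{X \setminus W} = \mathbf{1} - \chi_W$. It is injective: if $\pi(\chi_W) = \pi(\chi_{W'})$ then the clopen set $X \setminus (W \triangle W')$ is dense, hence equals $X$, so $W = W'$. For surjectivity, let $e = \pi(\varphi)$ with $e^2 = e$; then $\varphi^2 - \varphi \in I_0(X,K)$, so on the dense open set $U'$ obtained by intersecting $U$ with $\operatorname{int}\{x : \varphi(x)^2 = \varphi(x)\}$ the function $\varphi$ is continuous with values in the discrete two-point set $\{0,1\}$. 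Hence $A := \{x \in U' : \varphi(x) = 1\}$ is open in $X$ and $\varphi$ agrees with $\chi_A$ on $U'$, so $e = \pi(\chi_A)$. Because $X$ is extremally disconnected, $W := \overline{A}$ is clopen, and $\chi_{\overline{A}} - \chi_A$ vanishes off the nowhere dense set $\overline{A} \setminus A$, whence $\pi(\chi_W) = e$. Therefore $\nabla \cong \nabla(X)$; in particular $\nabla$ is complete.

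For \textbf{lateral completeness}, completeness of $\nabla$ has just been established, so it remains to treat a pairwise disjoint family $\{a_i\}_{i \in I}$. By the previous step each support is $s(a_i) = \pi(\chi_{W_i})$ with $W_i$ clopen, and disjointness forces $W_i \cap W_j = \emptyset$ for $i \neq j$. Replacing each representative $\varphi_i$ by $\varphi_i \chi_{W_i}$ (which does not change $a_i = a_i s(a_i)$), I may assume $\varphi_i$ vanishes off $W_i$ and is continuous on a dense open $U_i$. I would then define $\varphi$ to equal $\varphi_i$ on $W_i$ and $0$ on $X \setminus \bigcup_i W_i$, which is well defined since the $W_i$ are disjoint. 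The claim is that $\varphi \in AC(X,K)$: it is continuous on the open set $G = \bigl(\bigcup_i (W_i \cap U_i)\bigr) \cup \bigl(X \setminus \overline{\bigcup_i W_i}\bigr)$, since on each disjoint open piece $W_i \cap U_i$ it equals the continuous $\varphi_i$ and it is $0$ on the last open set; and $G$ is dense because each $W_i \cap U_i$ is dense in $W_i$, so $\bigcup_i (W_i \cap U_i)$ is dense in $\overline{\bigcup_i W_i}$. Finally $\varphi\chi_{W_i} = \varphi_i$, so $a := \pi(\varphi)$ satisfies $a\, s(a_i) = \pi(\varphi\chi_{W_i}) = a_i$ for every $i$, which is exactly lateral completeness.

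The delicate points I expect to be the main obstacles are twofold. First, verifying that the pointwise-defined pseudo-inverse $\psi$ and the glued function $\varphi$ are genuinely \emph{almost} continuous: this is where the openness of $V$, the disjointness of the clopen supports, and the density of each $U_i \cap W_i$ are indispensable for exhibiting the requisite dense open set of continuity. Second, the surjectivity of $W \mapsto \pi(\chi_W)$: here extremal disconnectedness of the Stone compact is precisely what allows one to replace the open set $A$ by the clopen set $\overline{A}$ without changing its class modulo $I_0(X,K)$, and this step genuinely fails for Stone compacts that are not extremally disconnected.
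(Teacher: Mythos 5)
Your proposal is correct and follows essentially the same route as the paper's proof: a pointwise pseudo-inverse supported on the open set where a continuous representative is nonzero gives regularity, the map $W \mapsto \pi(\chi_W)$ is shown to be a Boolean isomorphism (with extremal disconnectedness of the Stone compact used to replace the open set where $\varphi = 1_K$ by its clopen closure), and lateral completeness is obtained by gluing representatives over the pairwise disjoint (cl)open supports. The only cosmetic difference is that the paper glues directly on the open sets $V_i = U_i \cap \varphi_i^{-1}(K\setminus\{0\})$ rather than first truncating to the clopen supports $W_i$; both verifications of almost continuity are the same.
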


\begin{proof}
Since $AC(X,K)$ is a commutative unital algebra over $K$, then $C_\infty(X,K)$ is also a commutative unital algebra over $K$ with unit element $\pi(\mathbf{1})$. Now we show that $C_\infty(X,K)$ is a regular algebra, i.e. for any $\varphi \in AC(X,K)$ there exists $\psi \in AC(X,K)$, such that $\pi^2(\varphi)\pi(\psi)=\pi(\varphi)$.

We fix an element $\varphi \in AC(X,K)$ and choose a dense open set $U\in\tau$, such that the restriction $\varphi|_U: U \rightarrow (K,t)$ is continuous. Since $K\setminus\{0\}$ is an open set in $(K,t)$, then the set $V = U\cap\varphi^{-1}(K\setminus\{0\})$ is open in $(X,\tau)$. Clearly, the set $W = X \setminus \overline{V}^\tau$ is also open in $(X,\tau)$, in this case $V \cup W$ is a dense open set in $(X,\tau)$.

We define a map $\psi: X \rightarrow K$, as follow: $\psi(x) = (\varphi(x))^{-1}$ if $x \in V$, and $\psi(x) = 0$ if $x \in X \setminus V$. It is clear  that $\psi \in AC(X,K)$ and $\varphi^2\psi-\varphi \in I_0(X,K)$, i.e. $\pi^2(\varphi)\pi(\psi)=\pi(\varphi)$. Hence, the algebra $C_\infty(X,K)$ is regular.

For any clopen set $U\in\nabla(X)$ its characteristic function $\chi_U$ belongs to $AC(X,K)$, in this case, $\pi(\chi_U)^2 = \pi(\chi_U^2) = \pi(\chi_U)$, i.e. $\pi(\chi_U)$ is an idempotent in the algebra $C_\infty(X,K)$.

Assume that $(X,\tau)$ is a Stone compact and we show that for any idempotent $e \in C_\infty(X,K)$ there exists $U \in \nabla(X)$ such that $e = \pi(\chi_U)$.

If $e\in\nabla$, then $e = \pi(\varphi)$ for some $\varphi \in AC(X,K)$ and 
$$\pi(\varphi) = e^2 = \pi(\varphi^2),
$$
i.e. $(\varphi^2-\varphi) \in I_0(X,K)$. Hence, there exists a dense open set $V$ in $X$ such that $\varphi^2(t)-\varphi(t) = 0$ for all $t \in V$. Denote by $U$ a dense open set in $X$ such that the restriction $\varphi|_U: U \rightarrow K$ is continuous. Put $U_0 = \varphi^{-1}(\{0\})\cap(U \cap V)$, $U_1=\varphi^{-1}(\{1_K\})\cap(U \cap V)$. Since $U_0 \cap U_1 = \emptyset, U_0 \cup U_1 = U \cap V \in \tau$ and the sets $U_0$, $U_1$ are closed in $U \cap V$ with respect to the topology induced from $(X, \tau)$, it follows that $U_0, U_1 \in \tau$. Hence, the set $U_\varphi = \overline{U_1}$ belongs to the Boolean algebra $\nabla(X)$, besides, $U_\varphi \cap U_0 = \emptyset$.

Since $U_0\cup U_1=U\cap V$ is a dense open set in $(X, \tau)$ and $\varphi(t)=\chi_{U_\varphi}(t)$ for all $t \in U_0\cup U_1$, it follows that $e=\pi(\varphi)=\pi(\chi_{U_\varphi})$. Thus, the mapping $\Phi:\nabla(X)\rightarrow\nabla$ defined by the equality $\Phi(U)=\pi(\chi_U)$, $U \in\nabla(X)$, is a surjection.

Moreover, for $U,V\in\nabla(X)$ the following equalities hold
\[ \Phi(U \cap V) = \pi(\chi_{U \cap V}) = \pi(\chi_U\chi_V) = \pi(\chi_U)\pi(\chi_V) = \Phi(U)\Phi(V), \]
\[ \Phi(X \setminus U) = \pi(\chi_{X \setminus U}) = \pi(\mathbf{1} - \chi_U) = \Phi(X)-\Phi(U). \]
Furthermore, the equality $\Phi(U)=\Phi(V)$ implies that the continuous mappings $\chi_U$ and $\chi_V$ coincide on a dense set in $X$. Therefore $\chi_U=\chi_V$, that is $U=V$.

Hence, $\Phi$ is an isomorphism from the Boolean algebra $\nabla(X)$ onto the Boolean algebra $\nabla$ of all idempotents from $C_\infty(X,K)$, in particular, $\nabla$ is a complete Boolean algebra.

Finally, to prove $l$-completeness of the algebra $C_\infty(X,K)$ we show that for any family $\{\pi(\varphi_i): \varphi\in AC(X,K)\}_{i\in I}$ of nonzero pairwise disjoint elements in $C_\infty(X,K)$ there exists $\varphi \in AC(X,K)$ such that $\pi(\varphi)s(\pi(\varphi_i)) = \pi(\varphi_i)$ for all $i \in I$. For any $i \in I$ we choose a dense open set $U_i$ such that the restriction $\varphi_i|_{U_i}$ is continuous and put $V_i = U_i\cap\varphi_i^{-1}(K\setminus\{0\})$, $i \in I$. It is not hard to prove that $s(\pi(\varphi_i)) = \Phi(\overline{V_i})$. In particular, $V_i \cap V_j = \emptyset$ when $i \ne j$, $i,j \in I$. Define the mapping $\varphi: X \rightarrow K$, as follows $\varphi(t) = \varphi_i(t)$ if $t \in V_i$ and $\varphi(t) = 0$ if $t \in X\setminus\left(\bigcup\limits_{i \in I}V_i\right)$. Clearly, $\varphi \in AC(X,K)$ and $\pi(\varphi)s(\pi(\varphi_i)) = \pi(\varphi\chi_{\overline{V_i}}) = \pi(\varphi_i\chi_{\overline{V_i}}) = \pi(\varphi_i)$ for all $i \in I$.

\end{proof}

\section{Laterally complete regular modules}
\label{sec:2}
Let $\mathcal{A}$ be a laterally complete commutative regular algebra and let $\nabla$ be a Boolean algebra of all idempotents in $\mathcal{A}$. Let $X$ be a left $\mathcal{A}$-module with algebraic operations $x + y$ and $a x$, $x,y \in X$, $a\in\mathcal{A}$. Since the algebra $\mathcal{A}$ is commutative, then a left $\mathcal{A}$-module $X$ becomes a right $\mathcal{A}$-module, if we put $x a := a x$, $x \in X$, $a \in \mathcal{A}$. Hence, we can assume, that $X$ is a bimodule over $\mathcal{A}$, where the following equality $a x = x a$ holds for any $x \in X$, $a\in\mathcal{A}$. Next, an $\mathcal{A}$-bimodule $X$ we shall call an $\mathcal{A}$-module.

An $\mathcal{A}$-module $X$ is called faithful, if for any nonzero $e \in \nabla$ there exists $x \in X$ such that $ex \ne 0$. Clearly, for a faithful $\mathcal{A}$-module $X$ the set $X_e := eX$ is a faithful $\mathcal{A}_e$-module for any $0 \ne e \in \nabla$, where $\mathcal{A}_e := e\mathcal{A}$.

An $\mathcal{A}$-module $X$ is said to be a regular module, if for any $x\in\mathcal{A}$ the condition $ex = 0$ \ for all $e \in L \subset \nabla$ implies $(\sup L)x = 0$. In this case, for $x \in X$ the idempotent
$$
s(x) = \mathbf{1} - \sup\{e \in \nabla: ex = 0\}
$$
is called the support of an element $x$. In case, when $X = \mathcal{A}$, the notions of support of an element in an $\mathcal{A}$-module $X$ and of support of an element in $\mathcal{A}$ coincide. If $X$ is a regular $\mathcal{A}$-module, then $X_e$ is also a regular $\mathcal{A}_e$-module for any nonzero $e\in\nabla$.

We need the following properties of supports of elements in a regular $\mathcal{A}$-module $X$.

\begin{proposition}\label{art9_utv_2_3}
Let $X$ be a regular $\mathcal{A}$-module, $x,y \in X$, $a \in \mathcal{A}$. Then

(i). $s(x)x = x$;

(ii). if $e\in\nabla$ and $ex = x$, then $e \geq s(x)$;

(iii). $s(a x) = s(a)s(x)$.
\end{proposition}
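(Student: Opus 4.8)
The plan is to derive parts (i) and (ii) directly from the definition of the support together with the regularity hypothesis, and then to build part (iii) on top of them. Throughout I would abbreviate $L_0 = \{e \in \nabla : ex = 0\}$ and $p = \sup L_0$, so that by definition $s(x) = \mathbf{1} - p$, and I would use freely the commutativity of the module action ($ax = xa$) and the facts recorded in Section~2 that $s(a) = a\,i(a)$, that $s(i(a)) = s(a)$, and that $s(e) = e$ for idempotents.

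For (i), I would apply the regularity of $X$ to the set $L_0$ itself: since $ex = 0$ for every $e \in L_0$, regularity gives $(\sup L_0)x = px = 0$, whence $s(x)x = (\mathbf{1} - p)x = x - px = x$. For (ii), if $e \in \nabla$ satisfies $ex = x$, then $(\mathbf{1}-e)x = x - ex = 0$, so $\mathbf{1} - e \in L_0$ and therefore $\mathbf{1} - e \leq p = \mathbf{1} - s(x)$; taking complements in $\nabla$ reverses the order and yields $s(x) \leq e$. Both steps are routine once the definition is unwound.

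The substance is in (iii), which I would prove by establishing the two inequalities $s(ax) \leq s(a)s(x)$ and $s(a)s(x) \leq s(ax)$ separately. The first is easy: setting $e = s(a)s(x)$, which is an idempotent, I would compute $e(ax) = (s(a)a)(s(x)x) = ax$ using $s(a)a = a$ and part (i), so that part (ii) applied to the element $ax \in X$ gives $s(ax) \leq s(a)s(x)$.

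The reverse inequality is the main obstacle, and here the inverse map $i$ is the key tool. I would argue by contradiction: put $f = s(a)s(x)\,(\mathbf{1} - s(ax))$ and suppose $f \neq 0$. Since $f \leq \mathbf{1} - s(ax)$, part (i) forces $f(ax) = 0$, i.e. $(fa)x = 0$. Because $f \leq s(a)$, Proposition~\ref{art9_utv_2_2}(i) gives $s(fa) = s(f)s(a) = f \neq 0$, so $fa \neq 0$ and $i(fa)$ is available with $i(fa)(fa) = s(fa) = f$. Then $fx = \big(i(fa)(fa)\big)x = i(fa)\big((fa)x\big) = i(fa)\cdot 0 = 0$, which places $f$ in $L_0$ and hence $f \leq \mathbf{1} - s(x)$; combined with $f \leq s(x)$ this yields $f = f\,s(x) = 0$, a contradiction. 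Thus $s(a)s(x) \leq s(ax)$, and together with the first inequality I obtain $s(ax) = s(a)s(x)$. The one point requiring care is to keep the supports taken in $\mathcal{A}$ (of $a$, $fa$, $i(fa)$) conceptually distinct from the support taken in $X$ (of $x$, $ax$), while exploiting that all of them live in the single Boolean algebra $\nabla$.
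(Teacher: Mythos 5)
Your proposal is correct and follows essentially the same route as the paper: parts (i) and (ii) by unwinding the definition of $s(x)$ via $r(x)=\sup\{e:ex=0\}$ and regularity, and part (iii) by first getting $s(ax)\leq s(a)s(x)$ from (ii) and then killing the defect idempotent $f=s(a)s(x)-s(ax)$ with the inversion $i(fa)$, exactly as the paper does with its $g$. Your write-up is, if anything, slightly more explicit than the paper's at the final step, where the paper simply asserts $gx\neq 0$ while you spell out that $fx=0$ together with $f\leq s(x)$ forces $f=0$.
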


\begin{proof}
$(i)$. If $r(x) = \sup\{e\in\nabla: ex = 0\}$, then $s(x) = \mathbf{1} - r(x)$ and $r(x)x = 0$. Hence, $x = (s(x) + r(x))x = s(x)x$.

$(ii)$. As $ex = x$, then $(\mathbf{1} - e)x = 0$, and therefore $\mathbf{1} - e \leq r(x)$. Thus $e \geq \mathbf{1} - r(x) = s(x)$.

$(iii)$. Since $(s(a)s(x)) \cdot (ax) = (s(a)a) \cdot (s(x)x) = ax$, then by $(ii)$ we have $s(a x) \leq s(a)s(x)$. If $g = s(a)s(x) - s(a x) \ne 0$, then $ga \ne 0$, $g \leq s(a)$ and $gs(ax)=0$. Hence $gax = 0$ and $0 = i(ga)(gax) = (i(g)i(a)ga)x = (gi(a)a)x = gs(a)x = gx \ne 0$. This contradiction implies $g = 0$, i.e. $s(a x) = s(a)s(x)$.
\end{proof}

We say that a regular \ $\mathcal{A}$-module \ $X$ \ is laterally complete \ ($l$-complete), if for any set \  $\{x_i\}_{i \in I} \subset X$ \ and for any partition $\{e_i\}_{i \in I}$ of unity of the Boolean algebra $\nabla$ there exists $x \in X$ such that $e_i x = e_i x_i$ for all $i \in I$. In this case, the element $x$ is called mixing of the set $\{x_i\}_{i \in I}$ with respect to the partition of unity $\{e_i\}_{i \in I}$ and denote by $\underset{i \in I}{\mathrm{mix}}(e_i x_i)$. Mixing $\underset{i \in I}{\mathrm{mix}}(e_i x_i)$ is defined uniquely, whereas the equalities $e_i x = e_i x_i = e_i y$, $x, y \in X$, $i \in I$, implies $e_i (x - y) = 0$ for all $i \in I$, and, by regularity of the $\mathcal{A}$-module $X$, we obtain $x = y$.

Let $\{x_i\}_{i \in I} \subset E \subset X$ and let $\{e_i\}_{i \in I}$ be a partition of unity in $\nabla$. The set of all mixings $\underset{i \in I}{\mathrm{mix}}(e_i x_i)$ is called a cyclic hull of the set $E$ in $X$ and denotes by $\mathrm{mix}(E)$. Obviously, the inclusion $E \subset \mathrm{mix}(E)$ is always true. If $E = \mathrm{mix}(E)$, then $E$ is called a cyclic set in $X$ (compare with \cite{lit4}, 1.1.2).

Thus, a regular $\mathcal{A}$-module $X$ is a $l$-complete $\mathcal{A}$-module if and only if $X$ is a cyclic set. In particular, in any $l$-complete $\mathcal{A}$-module $X$ its submodule $X_e$ is also a $l$-complete $\mathcal{A}_e$-module for any nonzero idempotent $e$ in $\mathcal{A}$.

We need the following properties of cyclic hulls of sets.

\begin{proposition}\label{art9_utv_2_6}
Let $X$ be a $l$-complete $\mathcal{A}$-module and let $E$ be a nonempty subset in $X$, $a \in \mathcal{A}$. Then

(i). $\mathrm{mix}(\mathrm{mix}(E)) = \mathrm{mix}(E)$;

(ii). $\mathrm{mix}(a E) = a \mathrm{mix}(E)$;

(iii). If $Y$ is an $\mathcal{A}$-submodule in $X$, then $\mathrm{mix}(Y)$ is a $l$-complete $\mathcal{A}$-submodule in $X$;

(iv). If $U$ is an isomorphism from $\mathcal{A}$-module $X$ onto $\mathcal{A}$-module $Z$, then $Z$ is a $l$-complete $\mathcal{A}$-module and $\mathrm{mix}(U(E)) = U(\mathrm{mix}(E))$.
\end{proposition}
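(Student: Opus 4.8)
The plan is to reduce all four parts to two recurring tools: the uniqueness of mixings (guaranteed by regularity of $X$, as noted immediately after the definition of mixing), and the elementary fact that if $\{e_i\}_{i\in I}$ and $\{f_j\}_{j\in J}$ are partitions of unity in $\nabla$, then the refinement $\{e_i f_j\}_{(i,j)\in I\times J}$ is again a partition of unity. Indeed, pairwise disjointness is immediate, and $\sup_{i,j} e_i f_j = \sup_i(e_i \sup_j f_j) = \sup_i e_i = \mathbf{1}$. Throughout, an equality of two mixings will be established by checking that they agree after multiplication by each member of a suitable partition of unity and then invoking uniqueness. I also record the monotonicity $E\subseteq F \Rightarrow \mathrm{mix}(E)\subseteq\mathrm{mix}(F)$, since any mixing of elements of $E$ is a fortiori a mixing of elements of $F$.

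For (i), the inclusion $\mathrm{mix}(E)\subseteq\mathrm{mix}(\mathrm{mix}(E))$ is the trivial one $F\subseteq\mathrm{mix}(F)$ applied to $F=\mathrm{mix}(E)$. For the reverse inclusion I would take $x=\mathrm{mix}_{i\in I}(e_i y_i)$ with each $y_i=\mathrm{mix}_{j\in J_i}(f_{ij} x_{ij})\in\mathrm{mix}(E)$, $x_{ij}\in E$, form the refinement $\{e_i f_{ij}\}$, and set $z=\mathrm{mix}_{(i,j)}(e_i f_{ij} x_{ij})\in\mathrm{mix}(E)$. A direct computation gives $e_i f_{ij} x = e_i f_{ij} y_i = e_i f_{ij} x_{ij} = e_i f_{ij} z$, whence $x=z$ by uniqueness. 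Part (ii) follows from the single observation that if $y=\mathrm{mix}_i(e_i x_i)$ then $ay$ satisfies $e_i(ay)=a(e_i x_i)=e_i(a x_i)$, so $ay=\mathrm{mix}_i(e_i(ax_i))$ by uniqueness; reading this identity in both directions yields $a\,\mathrm{mix}(E)\subseteq\mathrm{mix}(aE)$ and $\mathrm{mix}(aE)\subseteq a\,\mathrm{mix}(E)$.

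For (iii) I would first check that $\mathrm{mix}(Y)$ is a submodule. Closure under scalars follows from (ii) and monotonicity: $a\,\mathrm{mix}(Y)=\mathrm{mix}(aY)\subseteq\mathrm{mix}(Y)$ since $aY\subseteq Y$. Closure under addition uses the refinement trick: given $z=\mathrm{mix}_i(e_i x_i)$ and $w=\mathrm{mix}_j(f_j y_j)$ with $x_i,y_j\in Y$, the element $\mathrm{mix}_{(i,j)}(e_i f_j (x_i+y_j))\in\mathrm{mix}(Y)$ agrees with $z+w$ after multiplication by each $e_i f_j$, hence equals $z+w$. Regularity of $\mathrm{mix}(Y)$ is inherited from $X$, because the defining implication $ez=0\ (e\in L)\Rightarrow(\sup L)z=0$ already holds in $X$. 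Finally, $l$-completeness is precisely cyclicity, and $\mathrm{mix}(\mathrm{mix}(Y))=\mathrm{mix}(Y)$ by (i), so the characterization that a regular module is $l$-complete iff it is cyclic closes the argument.

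Part (iv) is a transport-of-structure argument. Using $U(ax)=aU(x)$ and the bijectivity of $U$, I would verify that $Z$ is regular (apply the regularity implication in $X$ to $U^{-1}(z)$) and $l$-complete (if $x=\mathrm{mix}_i(e_i U^{-1}(z_i))$ then $U(x)$ is the required mixing of $\{z_i\}$, since $e_i U(x)=U(e_i x)=e_i z_i$). The same computation shows $U(\mathrm{mix}_i(e_i x_i))=\mathrm{mix}_i(e_i U(x_i))$, and reading it in both directions gives $\mathrm{mix}(U(E))=U(\mathrm{mix}(E))$. I expect the arguments to be routine rather than hard; the two points that require genuine care are the nested double-indexed refinement in (i), where the bookkeeping of the partition $\{e_i f_{ij}\}$ must be correct, and the order of reasoning in (iii), where one must confirm that $\mathrm{mix}(Y)$ is an honest regular submodule before invoking the cyclicity characterization. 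Everywhere else the work consists in exhibiting an explicit partition of unity and appealing to uniqueness of mixing.
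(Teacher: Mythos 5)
Your proposal is correct and follows essentially the same route as the paper: the reverse inclusion in (i) via the double-indexed refinement $\{e_i e_j^{(i)}\}$ and uniqueness of mixing, (ii) by the same two-directional computation with $e_i x = e_i(ay)$, (iii) by the refinement trick for sums plus (ii) for scalars and (i) for cyclicity, and (iv) by transport of structure through $U$. No substantive differences.
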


\begin{proof}
$(i)$. It is sufficient to show that $\mathrm{mix}(\mathrm{mix}(E)) \subset \mathrm{mix}(E)$. If $x\in\mathrm{mix}(\mathrm{mix}(E))$, then $x = \underset{i \in I}{\mathrm{mix}}(e_i x_i)$, where $x_i \in \mathrm{mix}(E)$, $i \in I$. Since $x_i\in\mathrm{mix}(E)$, then $x_i = \underset{j \in J(i)}{\mathrm{mix}}(e_j^{(i)} x_j^{(i)})$, where $x_j^{(i)} \in E$, $j \in J(i)$ and $\{e_j^{(i)}\}_{j \in J(i)}$ is a partition of unity in the Boolean algebra $\nabla$ for all $i \in I$. Fix $i \in I$ and put $g_j^{(i)} := e_i e_j^{(i)}$. It is clear that $\{g_j^{(i)}\}_{j \in J(i)}$ is a partition of the idempotent $e_i$. Hence, $\{g_j^{(i)}\}_{j \in J(i), i \in I}$ is a partition of unity $\mathbf{1}$. Besides, 
$$
g_j^{(i)}x = g_j^{(i)} e_i x = g_j^{(i)} e_i x_i = e_i e_j^{(i)} x_i = e_i e_j^{(i)} x_j^{(i)} = g_j^{(i)} x_j^{(i)}.
$$
This yields that $x = \underset{j \in J(i), i \in I}{\mathrm{mix}}(g_j^{(i)}x_j^{(i)}) \in \mathrm{mix}(E)$.

$(ii)$. If $x\in\mathrm{mix}(a E)$, then $x = \underset{i \in I}{\mathrm{mix}}(e_i ay_i)$, where $y_i \in E, i \in I$. Since $X$ is a $l$-complete $\mathcal{A}$-module, then there exists $y = \underset{i \in I}{\mathrm{mix}}(e_i y_i) \in \mathrm{mix}(E)$ and $e_i x = a e_i y_i = e_i (a y)$ for all $i \in I$. Hence, $e_i(x - a y) = 0$, and regularity of the $\mathcal{A}$-module $X$ implies the equality $x = a y$. Thus, $\mathrm{mix}(a E)\subset a\mathrm{mix}(E)$.

Conversely, if $x\in a \mathrm{mix}(E)$, then $x = a z$, where $z = \underset{i \in I}{\mathrm{mix}}(e_i z_i), z_i \in E, i \in I$. Since $a z_i \in a E$ and $e_i x = e_i (a z) = e_i a e_i z = e_i (a z_i)$ for all $i \in I$, we have that $x = \underset{i \in I}{\mathrm{mix}}(e_i(a z_i)) \in \mathrm{mix}(a E)$. Hence, $a \mathrm{mix}(E) \subset \mathrm{mix}(a E)$.

$(iii)$. Let $x, y \in \mathrm{mix}(Y)$, $x = \underset{i \in I}{\mathrm{mix}}(e_i x_i)$, $y = \underset{j \in J}{\mathrm{mix}}(g_j y_j)$, where $x_i, y_j \in Y$, $i \in I$, $j \in J$, $\{e_i\}_{i \in I}$, $\{g_j\}_{j \in J}$ are partitions of unity in $\nabla$. Clearly, that $p_{ij} = e_i g_j$, $i \in I$, $j \in J$, is also a partition of unity in $\nabla$ and $p_{ij}(x+y)=p_{ij}(x_i+y_j)$, where $x_i + y_j \in Y$ for all $i \in I$, $j \in J$. This means that $(x + y)\in\mathrm{mix}(Y)$.

Since $a Y \subset Y$, then by $(ii)$ we have that $a x \in a \mathrm{mix}(Y) = \mathrm{mix}(a Y) \subset \mathrm{mix}(Y)$. Hence, $\mathrm{mix}(Y)$ is an $\mathcal{A}$-submodule in $X$, and by regularity of the $\mathcal{A}$-module $X$, it is a regular $\mathcal{A}$-module. The equality $\mathrm{mix}(Y) = \mathrm{mix}(\mathrm{mix}Y)$ (see $(i)$) implies that $\mathrm{mix}Y$ is a $l$-complete $\mathcal{A}$-module.

$(iv)$. If $U(x) = y \in Z$, $x \in X$, $\emptyset \ne L \subset \nabla$ and $ey = 0$ for all $e \in L$, then $U(ex) = e U(x) = ey = 0$. Since $U$ is a bijection, then $ex = 0$ for any $e \in L$. By regularity of the $\mathcal{A}$-module $X$, we have that $(\sup L)x = 0$, and, therefore, $(\sup L)y = U((\sup L)x) = 0$. Hence, $Z$ is a regular $\mathcal{A}$-module. In the same way we show that $Z$ is a $l$-complete $\mathcal{A}$-module and the equality $\mathrm{mix}(U(E)) = U(\mathrm{mix}(E))$ holds.

\end{proof}

Let $\nabla$ be an arbitrary complete Boolean algebra. For any nonzero element $e\in\nabla$ we put $\nabla_e = \{q\in\nabla: q \leq e\}$. The set $\nabla_e$ is a Boolean algebra with the unity $e$ with respect to partial order, induced from $\nabla$.

We say that a set $B$ in $\nabla$ is a minorant subset for nonempty set $E\subset\nabla$, if for any nonzero $e \in E$ there exists nonzero $q \in B$ such that $q \leq e$. We need the following property of complete Boolean algebras.

\begin{theorem} {\rm (\cite{lit5}, 1.1.6)} \label{art9_teor_2_1}
If $\nabla$ is a complete Boolean algebra, $e$ is a nonzero element in $\nabla$ and $B$ is a minorant subset for $\nabla_e$, then there exists a disjoint subset $L \subset B$ such that $\sup L = e$.
\end{theorem}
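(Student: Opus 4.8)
The plan is to run a standard Zorn's lemma exhaustion argument. First I would reduce to the case in which every element of $B$ is a nonzero element dominated by $e$. Indeed, set $B_0 = \{b \in B : 0 \ne b \le e\}$. For any nonzero $f \in \nabla_e$ the minorant property of $B$ produces a nonzero $q \in B$ with $q \le f \le e$, whence $q \in B_0$; thus $B_0$ is again a minorant subset for $\nabla_e$. Since any pairwise disjoint subset of $B_0$ is in particular a pairwise disjoint subset of $B$, it suffices to locate the desired family inside $B_0$, and so I may assume $B \subseteq \nabla_e \setminus \{0\}$.

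Next I would consider the collection $\mathcal{D}$ of all subsets $D \subseteq B$ consisting of pairwise disjoint elements, partially ordered by inclusion. This collection is nonempty (it contains $\emptyset$), and the union of any chain in $\mathcal{D}$ is again a pairwise disjoint subset of $B$, so every chain has an upper bound in $\mathcal{D}$. By Zorn's lemma there is a maximal element $L \in \mathcal{D}$. Put $s = \sup L$, which exists by completeness of $\nabla$; since each element of $L$ lies below $e$, we have $s \le e$.

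Finally I would show $s = e$ by contradiction. If $s \ne e$, then $f = e \wedge Cs = e \wedge (\mathbf{1} - s)$ is a nonzero element of $\nabla_e$, so the minorant property yields a nonzero $q \in B$ with $q \le f$. As $q \le \mathbf{1} - s$ while every $\ell \in L$ satisfies $\ell \le \sup L = s$, we obtain $q \wedge \ell \le (\mathbf{1} - s) \wedge s = 0$ for all $\ell \in L$. Hence $L \cup \{q\}$ is a pairwise disjoint subset of $B$ strictly larger than $L$, contradicting maximality. Therefore $s = \sup L = e$, and $L$ is the required disjoint subset.

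The argument is largely routine; the one point deserving care is the passage from maximality to exhaustion, namely the observation that any minorant element $q$ of the ``gap'' $e \wedge Cs$ is automatically disjoint from all of $L$ precisely because $\sup L = s$. I expect this disjointness verification, together with the bookkeeping that confines attention to elements dominated by $e$, to be the main (if minor) technical obstacle. Completeness of $\nabla$ enters exactly at the step guaranteeing the existence of $\sup L$, which is what makes the final contradiction available.
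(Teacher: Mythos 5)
Your proof is correct. Note that the paper does not actually prove this statement --- it is quoted from Kusraev's \emph{Dominated operators}, 1.1.6 --- so there is no in-paper argument to compare against; your Zorn's-lemma exhaustion argument (reduce to $B_0=\{b\in B: 0\ne b\le e\}$, take a maximal pairwise disjoint $L$, and use the minorant property on the nonzero ``gap'' $e\wedge Cs$ to contradict maximality, with completeness entering only to form $s=\sup L$) is the standard proof of this classical exhaustion principle, and every step checks out.
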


We say that a Boolean algebra $\nabla$ has \emph{a countable type} or is \emph{$\sigma$-finite}, if any nonfinite family of nonzero pairwise disjoint elements in $\nabla$ is a countable set. A complete Boolean algebra $\nabla$ is called \emph{multi-$\sigma$-finite}, if for any nonzero element $g\in\nabla$ there exists $0 \ne e\in\nabla$ such that $e \leq g$ and the Boolean algebra $\nabla_e$ has a countable type. By theorem \ref{art9_teor_2_1}, a multi-$\sigma$-finite Boolean algebra $\nabla$ always has a partition $\{e_i\}_{i \in I}$ of unity $\mathbf{1}$ such that the Boolean algebra $\nabla_{e_i}$ has a countable type for all $i \in I$.

By theorem \ref{art9_teor_2_1} we set the following useful properties of $l$-complete $\mathcal{A}$-modules.

\begin{proposition}\label{art9_utv_2_7}
Let $X$ be an arbitrary $l$-complete $\mathcal{A}$-module and $\nabla$ be a complete Boolean algebra of all idempotents in $\mathcal{A}$. Then

(i). If $X$ is a faithful $\mathcal{A}$-module, then there exists an element $x \in X$ such that $s(x) = \mathbf{1}$;

(ii). If $Y$ is a $l$-complete $\mathcal{A}$-submodule in a regular $\mathcal{A}$-module $X$ and for any nonzero $e \in \nabla$ there exists a nonzero $g_e \in \nabla$ such that $g_e \leq e$ and $g_e Y = g_e X$, then $Y = X$.
\end{proposition}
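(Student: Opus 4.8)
The plan is to prove both parts by the same exhaustion strategy: construct a suitable minorant subset of $\nabla$, invoke Theorem \ref{art9_teor_2_1} to extract from it a partition of unity, and then use the $l$-completeness of the relevant module together with the regularity of $X$ to glue the resulting local data into a single global element. In each part the crux is to exhibit, below every prescribed nonzero idempotent, a nonzero idempotent with the property we want to propagate.

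For (i), the idea is to consider the family $B = \{s(x) : x \in X, \ s(x) \neq 0\}$ of nonzero supports. I would first check that $B$ is a minorant subset for $\nabla = \nabla_{\mathbf{1}}$: given a nonzero $e \in \nabla$, faithfulness yields some $x \in X$ with $ex \neq 0$, and by Proposition \ref{art9_utv_2_3}(iii) we have $s(ex) = s(e)s(x) = e\,s(x) \neq 0$ with $s(ex) \leq e$, so $s(ex) \in B$ lies below $e$. Applying Theorem \ref{art9_teor_2_1} produces a disjoint subfamily of $B$ with supremum $\mathbf{1}$; writing its members as $e_i = s(x_i)$, these form a partition of unity $\{e_i\}_{i \in I}$. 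The $l$-completeness of $X$ then gives an element $x$ with $e_i x = e_i x_i$ for all $i$, i.e. the mixing $\underset{i \in I}{\mathrm{mix}}(e_i x_i)$. The final step is to verify $s(x) = \mathbf{1}$: since $e_i x = e_i x_i = s(x_i)x_i = x_i$ by Proposition \ref{art9_utv_2_3}(i), we get $e_i s(x) = s(e_i x) = s(x_i) = e_i$, hence $e_i \leq s(x)$ for every $i$, and taking the supremum forces $s(x) = \mathbf{1}$.

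For (ii), I would fix an arbitrary $x \in X$ and aim to show $x \in Y$. The key observation is that the hypothesis $g_e Y = g_e X$ makes the family $B = \{g \in \nabla : g \neq 0, \ gx \in Y\}$ a minorant subset for $\nabla$: for any nonzero $e$, the given $g_e \leq e$ satisfies $g_e x \in g_e X = g_e Y \subset Y$, where the inclusion uses that $Y$ is a submodule. Theorem \ref{art9_teor_2_1} again yields a disjoint family $\{g_i\}_{i \in I} \subset B$ with $\sup_i g_i = \mathbf{1}$, i.e. a partition of unity with $g_i x \in Y$ for all $i$. Since $Y$ is $l$-complete, there is $y \in Y$ with $g_i y = g_i(g_i x) = g_i x$ for all $i$; then $g_i(y - x) = 0$ for every $i$, and the regularity of $X$ together with $\sup_i g_i = \mathbf{1}$ forces $y = x$, so $x \in Y$. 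As $x$ was arbitrary, $X \subseteq Y$, whence $Y = X$.

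The only delicate point, and the main thing to get right, is the minorant verification in each part: namely that the locally constructed support (resp. idempotent) genuinely lies below the prescribed $e$ and is nonzero, so that Theorem \ref{art9_teor_2_1} applies to $\nabla_{\mathbf{1}}$. Once the partition of unity is in hand, the gluing via $l$-completeness and the uniqueness via regularity are routine.
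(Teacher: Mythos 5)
Your proof is correct and follows exactly the route the paper intends: the paper omits the argument (referring to Proposition 2.4 of \cite{lit7a}) but explicitly flags Theorem \ref{art9_teor_2_1} as the tool, and your construction of a minorant subset of supports (resp.\ of idempotents $g$ with $gx \in Y$), extraction of a partition of unity, and gluing by $l$-completeness plus regularity is the standard exhaustion argument. All the local verifications (that $s(ex)=e\,s(x)\le e$ is nonzero, and that $g_e x \in g_e Y \subset Y$) are carried out correctly.
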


\emph{Proof} is in the same way as the proof of Proposition 2.4 in \cite{lit7a}.

We need a representation of a faithful $l$-complete $\mathcal{A}$-module $X$ as the Cartesian product of a faithful $l$-complete $\mathcal{A}_{e_i}$-modules family, where $\{e_i\}_{i \in I}$ is a partition of unity in the Boolean algebra $\nabla$ of all idempotents in $\mathcal{A}$. In the Cartesian product
\[ \prod_{i \in I}e_i X = \{\{y_i\}_{i \in I}: y_i \in e_i X\} \]
of $\mathcal{A}$-submodules $e_i X$ we consider coordinate-wise algebraic operations. It is clear that $\prod\limits_{i \in I}e_i X$ is a faithful $l$-complete $\mathcal{A}$-module. We define a map $U: X \rightarrow \prod\limits_{i \in I}e_i X$ given by $U(x) = \{e_i x\}_{i \in I}$. Obviously, $U$ is a homomorphizm from $X$ onto $\prod\limits_{i \in I}e_iX$. If $U(x) = U(y)$, then $e_i x = e_i y$ for all $i \in I$, and by regularity of the $\mathcal{A}$-module $X$, it follows that $x = y$.

If $z = \{x_i\}_{i \in I} \in \prod\limits_{i \in I}e_i X$, where $x_i \in e_i X \subset X$, $i \in I$, then $l$-completeness of the $\mathcal{A}$-module $X$ implies that there exists an element $x \in X$ such that $e_i x = e_i x_i = x_i$ for all $i \in I$. Hence, $U(x) = z$, i.e. $U$ is a surjection.

Thus, the following proposition holds.

\begin{proposition}\label{art9_utv_2_8}
If $X$ is a faithful $l$-complete $\mathcal{A}$-module, $\{e_i\}_{i \in I}$ is a partition of unity of the Boolean algebra $\nabla$ of all idempotents in $\mathcal{A}$, then $\prod\limits_{i \in I}e_i X$ is also a faithful $l$-complete $\mathcal{A}$-module and $U$ is an isomorphism from $X$ onto $\prod\limits_{i \in I}e_i X$.
\end{proposition}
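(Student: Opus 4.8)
The plan is to show that the coordinate map $U$ is the required isomorphism, after equipping the Cartesian product with an $\mathcal{A}$-module structure and recording that it inherits faithfulness, regularity, and $l$-completeness from $X$ coordinate-wise. All the ingredients already appear in the discussion preceding the statement, so the work is essentially bookkeeping, and I would organize it in that order.

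First I would fix the $\mathcal{A}$-action on $\prod_{i \in I} e_i X$ as $a \cdot \{y_i\}_{i \in I} = \{a y_i\}_{i \in I}$ and check it is well defined: every $y_i \in e_i X$ satisfies $e_i y_i = y_i$, so $a y_i = e_i(a y_i) \in e_i X$, and each coordinate stays in the correct submodule. Faithfulness is then verified from a single coordinate: for nonzero $e \in \nabla$ the distributive law gives $e = \sup_{i \in I}(e e_i)$, so $e e_j \ne 0$ for some $j$; faithfulness of $X$ yields an $x$ with $(e e_j) x \ne 0$, and the element with $j$-th coordinate $e_j x$ and all other coordinates zero is not annihilated by $e$. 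Regularity and $l$-completeness are checked in the same spirit: the condition defining regularity can be tested coordinate-wise, and a mixing along a partition $\{f_\alpha\}$ is built coordinate-wise by placing $e_i \, \mathrm{mix}_{\alpha}(f_\alpha x_i^{(\alpha)})$ in the $i$-th slot, using the corresponding properties of $X$.

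It remains to verify that $U(x) = \{e_i x\}_{i \in I}$ is an isomorphism. That $U$ is an $\mathcal{A}$-homomorphism is immediate from $e_i(x + y) = e_i x + e_i y$ and $e_i(a x) = a(e_i x)$. For injectivity, $U(x) = U(y)$ gives $e_i(x - y) = 0$ for every $i$; since $\{e_i\}_{i \in I}$ is a partition of unity, regularity of $X$ forces $x = y$. For surjectivity, given $z = \{x_i\}_{i \in I}$ with $x_i \in e_i X$, the $l$-completeness of $X$ furnishes $x = \mathrm{mix}_{i \in I}(e_i x_i)$ with $e_i x = e_i x_i = x_i$, so $U(x) = z$. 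The two places where a genuine structural hypothesis is used are injectivity (regularity of $X$) and surjectivity ($l$-completeness of $X$); I expect the only subtlety to be ensuring that the coordinate-wise constructions for regularity and $l$-completeness of the product land back inside each $e_i X$, which is handled by multiplying by $e_i$ exactly as above.
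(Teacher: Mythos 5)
Your proposal is correct and follows essentially the same route as the paper: the paper defines the coordinate-wise operations, asserts that the product is a faithful $l$-complete $\mathcal{A}$-module, and then proves injectivity of $U$ from regularity of $X$ and surjectivity from $l$-completeness, exactly as you do. The only difference is that you spell out the verifications of faithfulness, regularity, and $l$-completeness of the product that the paper dismisses as clear, and your details are sound.
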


\section{Homogenous $\mathcal{A}$-modules}
\label{sec:3}
Let $\mathcal{A}$ be a laterally complete commutative regular algebra, let $\nabla$ be a complete Boolean algebra of all idempotents in $\mathcal{A}$, let $X$ be a faithful $\mathcal{A}$-module. The following $\mathcal{A}$-submodule in $X$ is called $\mathcal{A}$-linear hull of a nonempty subset $Y \subset \mathcal{A}$
\[ \mathrm{Lin}(Y, \mathcal{A}) = \left\{ \sum_{i=1}^na_i y_i: a_i \in \mathcal{A}, y_i \in Y, i=1,\ldots,n, n \in\mathcal{N} \right\}, \]
where $\mathcal{N}$ is the set of all natural numbers. If $X$ is a $l$-complete $\mathcal{A}$-module, then by proposition \ref{art9_utv_2_6} $(iii)$, $\mathrm{mix}(\mathrm{Lin}(Y, \mathcal{A}))$ is also a $l$-complete $\mathcal{A}$-submodule in $X$.

A set $\{x_i\}_{i \in I}$ in an $\mathcal{A}$-module $X$ is called $\mathcal{A}$-\emph{linearly independent}, if for any $a_1, \ldots, a_n \in \mathcal{A}$, $x_{i_1}, \ldots, x_{i_n} \in \{x_i\}_{i \in I}$, $n\in \mathcal{N}$, the equality $\sum\limits_{k=1}^n a_k x_{i_k} = 0$ implies equalities $a_1 = \ldots = a_n = 0$.

\begin{proposition}\label{art9_utv_3_3}
If $Y = \{x_1, \ldots, x_k\}$ is a finite $\mathcal{A}$-linearly independent subset in a $l$-complete $\mathcal{A}$-module $X$, then $\mathrm{mix}(\mathrm{Lin}(Y, \mathcal{A})) = \mathrm{Lin}(Y, \mathcal{A})$.
\end{proposition}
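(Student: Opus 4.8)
\emph{Proof plan.} Since the inclusion $E\subset\mathrm{mix}(E)$ always holds, I only need the reverse inclusion $\mathrm{mix}(\mathrm{Lin}(Y,\mathcal{A}))\subset\mathrm{Lin}(Y,\mathcal{A})$; equivalently, I must show that the submodule $\mathrm{Lin}(Y,\mathcal{A})$ is itself $l$-complete, i.e. a cyclic set. The plan is to identify $\mathrm{Lin}(Y,\mathcal{A})$ with the free module $\mathcal{A}^k=\prod_{j=1}^k\mathcal{A}$ and transport $l$-completeness along this identification. I would consider the map $\Psi:\mathcal{A}^k\rightarrow\mathrm{Lin}(Y,\mathcal{A})$ defined by $\Psi(a_1,\dots,a_k)=\sum_{j=1}^k a_j x_j$. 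It is clearly an $\mathcal{A}$-module homomorphism and is surjective by the definition of $\mathrm{Lin}(Y,\mathcal{A})$. The hypothesis of linear independence enters here decisively: $\sum_j a_j x_j=0$ forces $a_1=\dots=a_k=0$, so $\ker\Psi=\{0\}$ and $\Psi$ is an $\mathcal{A}$-module isomorphism.

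The technical heart is to verify that $\mathcal{A}^k$ is an $l$-complete $\mathcal{A}$-module. Since $l$-completeness of a finite Cartesian product reduces coordinate-wise (compare Proposition \ref{art9_utv_2_8}) to $l$-completeness of each factor, it suffices to show that $\mathcal{A}$, regarded as a module over itself, is $l$-complete. Thus, given a partition of unity $\{e_\alpha\}_{\alpha\in A}$ in $\nabla$ and a family $\{a^{(\alpha)}\}_{\alpha\in A}\subset\mathcal{A}$, I must produce $b\in\mathcal{A}$ with $e_\alpha b=e_\alpha a^{(\alpha)}$ for all $\alpha$. Putting $c_\alpha=e_\alpha a^{(\alpha)}$, Proposition \ref{art9_utv_2_2}(i) gives $s(c_\alpha)=e_\alpha s(a^{(\alpha)})\le e_\alpha$, so the $c_\alpha$ are pairwise disjoint, and lateral completeness of the \emph{algebra} $\mathcal{A}$ yields $b_0\in\mathcal{A}$ with $b_0 s(c_\alpha)=c_\alpha$ for every $\alpha$.

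The delicate point — the step I expect to be the main obstacle — is that algebra lateral completeness controls $b_0$ only on the supports $s(c_\alpha)$, whereas module mixing demands agreement on all of $e_\alpha$, which may strictly exceed $s(c_\alpha)$. I would resolve this by truncating: set $g=\sup_{\alpha}s(c_\alpha)$ and $b=b_0 g$. Using the one-sided infinite distributive law $e_\alpha\wedge\sup_{\beta}s(c_\beta)=\sup_{\beta}\bigl(e_\alpha\wedge s(c_\beta)\bigr)$, valid in every complete Boolean algebra, together with $s(c_\beta)\le e_\beta$ and $e_\alpha e_\beta=0$ for $\beta\ne\alpha$, one obtains $e_\alpha g=s(c_\alpha)$, whence $e_\alpha b=b_0(e_\alpha g)=b_0 s(c_\alpha)=c_\alpha=e_\alpha a^{(\alpha)}$, as required. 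Applying this coordinate-wise shows that $\mathcal{A}^k$ is $l$-complete.

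Finally, since $\Psi$ is an $\mathcal{A}$-module isomorphism from the $l$-complete module $\mathcal{A}^k$ onto $\mathrm{Lin}(Y,\mathcal{A})$, Proposition \ref{art9_utv_2_6}(iv) guarantees that $\mathrm{Lin}(Y,\mathcal{A})$ is an $l$-complete $\mathcal{A}$-module. Because the mixings defining $\mathrm{mix}(\mathrm{Lin}(Y,\mathcal{A}))$ are computed in the regular module $X$ and are therefore unique, $l$-completeness of $\mathrm{Lin}(Y,\mathcal{A})$ means precisely that it is a cyclic set, i.e. $\mathrm{mix}(\mathrm{Lin}(Y,\mathcal{A}))=\mathrm{Lin}(Y,\mathcal{A})$, which is the desired equality.
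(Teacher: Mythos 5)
Your proof is correct, but it takes a different route from the paper's. The paper argues directly on elements: given $x\in\mathrm{mix}(\mathrm{Lin}(Y,\mathcal{A}))$ with $e_i x=\sum_{j=1}^k e_i a_j^{(i)}x_j$ for a partition of unity $\{e_i\}_{i\in I}$, it mixes the coefficients in the algebra, producing $\beta_j\in\mathcal{A}$ with $e_i\beta_j=e_i a_j^{(i)}$ for all $i$, and then concludes $x=\sum_j\beta_j x_j$ by regularity of $X$; notably this argument never invokes the $\mathcal{A}$-linear independence of $Y$, so it proves the equality for an arbitrary finite generating set. You instead use linear independence essentially, to make $\Psi:\mathcal{A}^k\rightarrow\mathrm{Lin}(Y,\mathcal{A})$ injective, and then transport $l$-completeness of $\mathcal{A}^k$ through this isomorphism via Proposition \ref{art9_utv_2_6}(iv). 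The common technical core is the same in both arguments --- the existence of a mixing of a coefficient family $\{e_i a_j^{(i)}\}$ inside the laterally complete algebra $\mathcal{A}$ --- and your truncation step $b=b_0\sup_\alpha s(c_\alpha)$, bridging the gap between agreement on supports $s(c_\alpha)$ and agreement on the full idempotents $e_\alpha$, is exactly the point the paper leaves implicit when it asserts the existence of the $\beta_j$; spelling it out is a genuine improvement in rigor. What your route buys is a structural statement (the module $\mathrm{Lin}(Y,\mathcal{A})\cong\mathcal{A}^k$ is itself $l$-complete, which the paper also needs later for $\prod_{i\in I}\mathcal{A}$); what it costs is generality, since it does not cover linearly dependent finite sets, and it requires the small extra observation --- which you do supply --- that the internal mixings in $\mathrm{Lin}(Y,\mathcal{A})$ coincide with those computed in $X$ because mixings in the regular module $X$ are unique.
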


\begin{proof}
It is sufficient to show the \ following \ inclusion \ $\mathrm{mix}(\mathrm{Lin}(Y, \mathcal{A})) \subset \mathrm{Lin}(Y, \mathcal{A})$. \ Let \ $x \in \mathrm{mix} (\mathrm{Lin}(Y, \mathcal{A}))$, \  $\{e_i\}_{i \in I}$ \ be a partition of unity in the Boolean algebra \ $\nabla$ \ and let \ \ $\{y_i\}_{i \in I} \subset \mathrm{Lin} (Y, \mathcal{A})$ be such that $e_i x = e_i y_i$ for all $i \in I$. Since $e_i x = e_i y_i \in \mathrm{Lin}(Y, \mathcal{A})$, then $e_i x = \sum\limits_{j=1}^k a_j^{(i)} x_j$ for some $a_j^{(i)} \in \mathcal{A}$, $j=1, \ldots, k$. Hence, $e_i x = e_i(e_i x) = \sum\limits_{j=1}^k e_i a_j^{(i)} x_j$. Since $\mathcal{A}$ is a $l$-complete commutative regular algebra and $\{e_i\}_{i \in I}$ is a partition of unity in $\nabla$, then there exists a unique element $\beta_j \in \mathcal{A}$ such that $e_i \beta_j = e_i a_j^{(i)}$ for all $i\in I$, where $j \in \{1, \ldots, k\}$. Thus, $e_i x = \sum\limits_{j=1}^k e_i \beta_j x_j = e_i \left(\sum\limits_{j=1}^k \beta_j x_j\right)$ for any $i \in I$, and this implies the equality $x = \sum\limits_{j=1}^k \beta_j x_j \in \mathrm{Lin}(Y, \mathcal{A})$.

\end{proof}

We say that an $\mathcal{A}$-linearly independent system $\{x_i\}_{i \in I}$ from a $l$-complete $\mathcal{A}$-module $X$ is $\mathcal{A}$-\emph{Hamel basis}, if
\[ \mathrm{mix}(\mathrm{Lin}(\{x_i\}_{i \in I}, \mathcal{A})) = X. \]

In the case when an $\mathcal{A}$-Hamel basis is a finite set, we say that it is an $\mathcal{A}$-basis in $X$.

\begin{theorem}\label{art9_teor_4_2a}
If $\{x_i\}_{i=1}^n$, $\{y_j\}_{j=1}^k$ are $\mathcal{A}$-basises in an $\mathcal{A}$-module $X$, then $n=k$.
\end{theorem}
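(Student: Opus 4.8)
The plan is to reduce the statement to the invariance of the rank of a finitely generated free module over the commutative ring $\mathcal{A}$, and then to settle that invariance by a determinant argument that is insensitive to the presence of zero divisors (idempotents) in $\mathcal{A}$. First I would exploit that both bases are \emph{finite}. By Proposition~\ref{art9_utv_3_3}, for a finite $\mathcal{A}$-linearly independent set $Y$ one has $\mathrm{mix}(\mathrm{Lin}(Y, \mathcal{A})) = \mathrm{Lin}(Y, \mathcal{A})$; applying this to $Y = \{x_i\}_{i=1}^n$ and to $Y = \{y_j\}_{j=1}^k$, together with the defining property $\mathrm{mix}(\mathrm{Lin}(\,\cdot\,, \mathcal{A})) = X$ of a Hamel basis, yields $X = \mathrm{Lin}(\{x_i\}_{i=1}^n, \mathcal{A}) = \mathrm{Lin}(\{y_j\}_{j=1}^k, \mathcal{A})$. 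Combined with $\mathcal{A}$-linear independence this makes the evaluation maps $\mathcal{A}^n \to X$, $(a_i) \mapsto \sum_i a_i x_i$, and $\mathcal{A}^k \to X$, $(b_j) \mapsto \sum_j b_j y_j$, isomorphisms of $\mathcal{A}$-modules, so $\mathcal{A}^n \cong \mathcal{A}^k$. Concretely, I would write each basis through the other, $y_j = \sum_{i=1}^n p_{ij} x_i$ and $x_i = \sum_{j=1}^k q_{ji} y_j$ with $p_{ij}, q_{ji} \in \mathcal{A}$, substitute one relation into the other, and use $\mathcal{A}$-linear independence to obtain the matrix identities $PQ = I_n$ and $QP = I_k$, where $P = (p_{ij})$ is $n \times k$ and $Q = (q_{ji})$ is $k \times n$.

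The main step is then the invariant basis number for $\mathcal{A}$: the identities $PQ = I_n$, $QP = I_k$ force $n = k$. Assuming $n \le k$ and, for contradiction, $n < k$, I would pad both matrices to square size $k \times k$, setting $\widetilde{Q} = [\,Q \mid 0\,]$ by appending $k - n$ zero columns and $\widetilde{P} = \left[\begin{smallmatrix} P \\ 0 \end{smallmatrix}\right]$ by appending $k - n$ zero rows. A block computation gives $\widetilde{Q}\,\widetilde{P} = QP = I_k$, so by multiplicativity of the determinant over the commutative ring $\mathcal{A}$ one obtains $\det(\widetilde{Q})\,\det(\widetilde{P}) = \det(I_k) = \mathbf{1}$. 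But $\widetilde{Q}$ has a zero column, so $\det(\widetilde{Q}) = 0$ and hence $\mathbf{1} = 0$, contradicting $\mathbf{1} \ne 0$ in $\mathcal{A}$. Therefore $n = k$.

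The point I expect to require the most care is the first paragraph rather than the last: one must be certain that finiteness genuinely collapses $\mathrm{mix}(\mathrm{Lin})$ back to $\mathrm{Lin}$, which is exactly where regularity and lateral completeness of $\mathcal{A}$ enter, through Proposition~\ref{art9_utv_3_3}, so that $X$ is truly free of each of the two stated ranks. Once freeness is secured, the determinant step uses nothing about $\mathcal{A}$ beyond its being a nonzero commutative unital ring, so the possible zero divisors cause no difficulty at all. As an alternative to the determinant computation one could instead choose a maximal ideal $\mathfrak{m} \subset \mathcal{A}$ and pass to the residue field $\mathcal{A}/\mathfrak{m}$: reducing $PQ = I_n$ and $QP = I_k$ modulo $\mathfrak{m}$ gives mutually inverse matrices over a field, which forces $n = k$ by ordinary linear algebra; I would prefer the determinant version, since it avoids invoking the existence of maximal ideals and keeps the proof self-contained.
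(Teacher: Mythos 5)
Your proof is correct, but it follows a genuinely different route from the paper's. Both arguments begin identically: Proposition~\ref{art9_utv_3_3} collapses $\mathrm{mix}(\mathrm{Lin}(\cdot,\mathcal{A}))$ to $\mathrm{Lin}(\cdot,\mathcal{A})$ for each finite basis, so that $X = \mathrm{Lin}(\{x_i\}_{i=1}^n,\mathcal{A}) = \mathrm{Lin}(\{y_j\}_{j=1}^k,\mathcal{A})$. From there the paper proves Lemma~\ref{art9_lemma_3_3}, a Steinitz-exchange induction adapted to the regular setting: it localizes to the idempotent $p = s(a_{kl}e)$ on which a leading coefficient becomes invertible via the inversion $h = i(a_{kl})$, eliminates one generator, and concludes that \emph{any} $\mathcal{A}_e$-linearly independent subset of $\mathrm{Lin}(\{ez_i\}_{i=1}^n,\mathcal{A}_e)$ has at most $n$ elements. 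You instead observe that the two evaluation maps give $\mathcal{A}^n \cong \mathcal{A}^k$, extract the transition matrices with $PQ = I_n$, $QP = I_k$, and run the standard determinant proof of invariant basis number for commutative rings; the padding computation and the conclusion $\det(\widetilde{Q})\det(\widetilde{P}) = \mathbf{1}$ versus $\det(\widetilde{Q}) = 0$ are correct (modulo the harmless implicit assumption $\mathbf{1} \neq 0$, which holds since $\mathcal{A}$ is a unital algebra over the field $K$). Your argument is shorter and uses nothing about regularity or lateral completeness beyond Proposition~\ref{art9_utv_3_3}, which is a genuine economy. The trade-off is that it proves strictly less than the paper's lemma: the determinant trick needs \emph{both} identities $PQ = I_n$ and $QP = I_k$, hence both spanning relations, whereas Lemma~\ref{art9_lemma_3_3} bounds the size of an independent set that merely sits inside an $n$-generated span and is not assumed to span. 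That one-sided bound is reused later in the paper (in Theorem~\ref{art9_teor_5_4} and in the claim that every $n$-homogeneous module is strictly $n$-homogeneous), so your approach, while fully adequate for the theorem as stated, would not substitute for the lemma in those subsequent applications; recovering the one-sided statement by your methods would require McCoy-type results or exterior powers rather than the bare determinant identity.
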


\begin{proof}
First we shall show the following $\mathcal{A}$-variant of one known fact from the linear algebra.

\begin{lemma}\label{art9_lemma_3_3}
Let $\{z_i\}_{i=1}^n \subset X, \{y_j\}_{j=1}^k \subset X, \{ey_j\}_{j=1}^k \subset \mathrm{Lin}(\{ez_i\}_{i=1}^n, \mathcal{A}_e)$ for nonzero $e \in \nabla$. If the set $\{e y_1, \ldots, ey_k\}$ is $\mathcal{A}_e$-linearly independent, then $k \leq n$.
\end{lemma}

\begin{proof}
We use the mathematical induction. Let us suppose that for $n=1$, $k > 1$ the equalities $e y_1 = a_1 e z_1, \ldots, e y_k = a_k e z_1$ hold, where $a_i \in \mathcal{A}_e$, $i = 1, \ldots, k$. Since $a_2 e y_1 + (-a_1) e y_2 = 0$, then $ea_1 = ea_2 = 0$, i.e. $e y_1 = e y_2 = 0$, this contradicts to $\mathcal{A}_e$-linear independence of the elements $e y_1$ and $e y_2$. Hence, $k=1$.

Now assume that the lemma holds for $n=l-1$. Let $\{z_i\}_{i=1}^l \subset X$ and the following equalities hold
\begin{equation}\label{lemma_sfdm2_8_eq_1}
ey_j = \sum_{i=1}^l a_{ji}ez_i, a_{ji} \in \mathcal{A}_e, \quad j = 1, \ldots, k, i= 1, \ldots, l.
\end{equation}

Let $a_{j_0 l}e x_l \ne 0$ for some $j_0 \in \{1, \ldots, k\}$. By reindexing $\{y_j\}_{j=1}^k$, we can assume that $a_{kl}ex_l \ne 0$, in particular $p = s(a_{kl}e) \ne 0$, wherein $p \leq e$. Since the set $\{e y_j\}_{j=1}^k$ is $\mathcal{A}_e$-linearly independent, then the set $\{p y_j\}_{j=1}^k$ is $\mathcal{A}_p$-linearly independent, wherein, by \eqref{lemma_sfdm2_8_eq_1}, we have
\begin{equation}\label{lemma_sfdm2_8_eq_2}
p y_j = \sum_{i=1}^l a_{ji}p z_i, \quad j = 1, \ldots, k.
\end{equation}

Since $\mathcal{A}$ is a regular algebra, then for the inversion $h = i(a_{kl}) \in \mathcal{A}$ the equality $ha_{kl} = s(a_{kl})$ holds. Therefore the following equality
\[ p y_k = \sum_{i=1}^{l-1} a_{ki}pz_i + a_{kl}pz_l \]
implies
\begin{equation}\label{lemma_sfdm2_8_eq_3}
pz_l = h p y_k - \sum_{i=1}^{l-1} a_{ki} h p z_i.
\end{equation}
Substitute $pz_l$ from \eqref{lemma_sfdm2_8_eq_3} in the first $(k-1)$ equalities from \eqref{lemma_sfdm2_8_eq_2} and collect similar terms, we obtain
\[ p y_j - ha_{jl}py_k = \sum_{i=1}^{l-1}\beta_{ji}pz_i \in \mathrm{Lin}(\{pz_i\}_{i=1}^{l-1}, \mathcal{A}_p) \]
for some $\beta_{ji}\in\mathcal{A}_p$, $i=1,\ldots,l-1$, $j=1,\ldots,k-1$.

Let us show that the elements $u_j = py_j - ha_{jl}py_k$, $j = 1, \ldots, k-1$ are $\mathcal{A}_p$-linearly independent. Let
\[ \sum_{j=1}^{k-1}\gamma_jpy_j - \left(\sum_{j=1}^{k-1}\gamma_j h a_{jl}\right)py_k = \sum\limits_{j=1}^{k-1}\gamma_j u_j = 0, \]
where $\gamma_j\in\mathcal{A}_p$, $j=1,\ldots,k-1$. Since $\{py_j\}_{j=1}^k$ is $\mathcal{A}_p$-linearly independent, then $p\gamma_1 = p\gamma_2 = \ldots = p\gamma_{k-1} = 0$, i.e. the set $\{u_j\}_{j=1}^{k-1}$ is $\mathcal{A}_p$-linearly independent in $pX$. By the assumption of the mathematical induction we have that $k-1 \leq l-1$, and thus $k \leq l$. The Lemma \ref{art9_lemma_3_3} is proved.
\end{proof}

Return to the proof of Theorem \ref{art9_teor_4_2a}. As $\{x_i\}_{i=1}^n$ is an $\mathcal{A}$-basis in $X$, then by Proposition \ref{art9_utv_3_3} we obtain that $X = \mathrm{Lin}(\{x_i\}_{i=1}^n, \mathcal{A})$. On the other hand, $\{y_j\}_{j=1}^k \subset X$ and $\{y_j\}_{j=1}^k$ is an $\mathcal{A}$-linearly independent set. Therefore, by Lemma \ref{art9_lemma_3_3} it follows that $k \leq n$.

Similarly, we show that $n \leq k$, and thus $n=k$.

\end{proof}

Next we need the following characterization of $\mathcal{A}$-Hamel basises.

\begin{proposition}\label{art9_utv_3_2}
For an $\mathcal{A}$-linearly independent set $\{x_i\}_{i \in I}$ in a $l$-complete $\mathcal{A}$-module $X$ the following conditions are equivalent:

(i). $\{x_i\}_{i \in I}$ is an $\mathcal{A}$-Hamel basis;

(ii). For any $x \in X$ and any nonzero idempotent $e\in\mathcal{A}$ there exists a nonzero idempotent $g \leq e$, such that $gx \in g \mathrm{Lin}(\{x_i\}_{i \in I}, \mathcal{A})$.
\end{proposition}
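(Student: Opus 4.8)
The plan is to prove both implications directly from the definition of $\mathcal{A}$-Hamel basis, namely $\mathrm{mix}(\mathrm{Lin}(\{x_i\}_{i\in I},\mathcal{A})) = X$, together with the characterization of cyclic hulls and the local-refinement technique supplied by Theorem~\ref{art9_teor_2_1}. Throughout I write $L := \mathrm{Lin}(\{x_i\}_{i\in I},\mathcal{A})$ for brevity.

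First I would prove $(i)\Rightarrow(ii)$. Assume $\mathrm{mix}(L)=X$ and fix $x\in X$ and a nonzero $e\in\nabla$. Since $x\in\mathrm{mix}(L)$, there is a partition of unity $\{f_j\}_{j\in J}$ in $\nabla$ and elements $y_j\in L$ with $f_j x = f_j y_j$ for all $j$. Because $\sup_j f_j=\mathbf 1$ and $e\neq 0$, there is at least one index $j_0$ with $ef_{j_0}\neq 0$; put $g := ef_{j_0}$. Then $g\le e$, $g\neq 0$, and $gx = g f_{j_0} x = g f_{j_0} y_{j_0} = g y_{j_0}\in gL = g\,\mathrm{Lin}(\{x_i\}_{i\in I},\mathcal{A})$, which is exactly condition (ii). This direction is essentially immediate from unfolding the definition of mixing.

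The substance is in $(ii)\Rightarrow(i)$. Here I would show $X\subset\mathrm{mix}(L)$ (the reverse inclusion being automatic). Fix $x\in X$ and form the set $B := \{\,0\neq g\in\nabla : gx\in gL\,\}$. Condition (ii), applied with $e$ ranging over all nonzero idempotents, says precisely that $B$ is a minorant subset for all of $\nabla$: for every nonzero $e$ there is a nonzero $g\in B$ with $g\le e$. By Theorem~\ref{art9_teor_2_1} (with $e=\mathbf 1$) there is a disjoint family $\{g_k\}_{k\in K}\subset B$ with $\sup_k g_k = \mathbf 1$, i.e.\ a partition of unity. For each $k$ we have $g_k x\in g_k L$, so choose $y_k\in L$ with $g_k x = g_k y_k$. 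Then $x = \underset{k\in K}{\mathrm{mix}}(g_k y_k)\in\mathrm{mix}(L)$, and since $x$ was arbitrary, $\mathrm{mix}(L)=X$, proving that $\{x_i\}_{i\in I}$ is an $\mathcal{A}$-Hamel basis.

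The main obstacle is the passage from the \emph{local} hypothesis in (ii)---which only guarantees, below each nonzero idempotent, \emph{some} smaller idempotent on which $x$ lies in the linear hull---to a \emph{global} partition of unity covering all of $\mathbf 1$ simultaneously. This is exactly the exhaustion step that Theorem~\ref{art9_teor_2_1} is designed to perform, so the crux of the argument is recognizing that $B$ is a minorant set and invoking that theorem; once the partition $\{g_k\}$ is in hand, lateral completeness of $X$ lets us assemble $x$ as the mixing. One point to verify carefully is that membership $g_k x\in g_k L$ really does produce an element $y_k\in L$ with $g_k x = g_k y_k$ (rather than merely $g_k x = g_k(a\,y)$ for some product), but this is immediate since $g_k L = g_k\,\mathrm{Lin}(\{x_i\},\mathcal{A})$ consists of exactly the elements $g_k\cdot(\text{finite }\mathcal{A}\text{-combination})$, each of which lies in $L$.
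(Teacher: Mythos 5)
Your argument is correct and coincides with the paper's own proof: the same refinement $g=e f_{j_0}$ for $(i)\Rightarrow(ii)$, and for $(ii)\Rightarrow(i)$ the same recognition that the idempotents $g$ with $gx\in g\,\mathrm{Lin}(\{x_i\}_{i\in I},\mathcal{A})$ form a minorant set, followed by Theorem~\ref{art9_teor_2_1} to extract a partition of unity and exhibit $x$ as a mixing. No discrepancies to report.
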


\begin{proof}
$(i) \Rightarrow (ii)$. If $X = \mathrm{mix}(\mathrm{Lin}(\{x_i\}_{i \in I}, \mathcal{A}))$, then for $x \in X$ there exists a partition $\{e_j\}_{j \in J}$ of unity, such that $e_j x \in e_j \mathrm{Lin}(\{x_i\}_{i \in I}, \mathcal{A})$. Since $\sup\limits_{j \in J}e_j = \mathbf{1}$, then for $0 \ne e \in \nabla$ there exists an element $j_0 \in J$ such that $g = e_{j_0}e \ne 0$, wherein $gx \in g \mathrm{Lin}(\{x_i\}_{i \in I}, \mathcal{A})$.

$(ii) \Rightarrow (i)$. Fix $0 \ne x \in X$ and for any nonzero idempotent $e\in\nabla$ choose a nonzero idempotent $g(e,x) \leq e$ such that $g(e,x) x \in g(e, x) \mathrm{Lin}(\{x_i\}_{i \in I}, \mathcal{A})$. By Theorem \ref{art9_teor_2_1}, there exists a set $\{q_j\}_{j \in J}$ of pairwise disjoint idempotents in $\mathcal{A}$ such that $\sup\limits_{j \in J}q_j = \mathbf{1}$ and $q_j x \in q_j \mathrm{Lin}(\{x_i\}_{i \in I}, \mathcal{A})$ for all $j \in J$. This means that $x \in \mathrm{mix}(\mathrm{Lin}(\{x_i\}_{i \in I}, \mathcal{A}))$, which implies the equality $X = \mathrm{mix}(\mathrm{Lin}(\{x_i\}_{i \in I}, \mathcal{A}))$.

\end{proof}

Fix some cardinal number $\gamma$. A faithful $l$-complete $\mathcal{A}$-module $X$ is called \emph{$\gamma$-homogeneous}, if there exists an $\mathcal{A}$-Hamel basis $\{x_i\}_{i \in I}$ in $X$ with $\mathrm{card}\, I = \gamma$. We say that $\mathcal{A}$-module $X$ \emph{homogeneous}, if it is a $\gamma$-homogeneous $\mathcal{A}$-module for some cardinal number $\gamma$.

If $X$ is a $\gamma$-homogeneous $\mathcal{A}$-module, then obviously, $eX$ is also $\gamma$-homogeneous $\mathcal{A}_e$-module for any nonzero idempotent $e\in\mathcal{A}$. Besides, by Proposition \ref{art9_utv_2_6} ($iv$) it follows that, if $\mathcal{A}$-module $Y$ is isomorphic to a $\gamma$-homogeneous $\mathcal{A}$-module $X$, then $Y$ is also a $\gamma$-homogeneous module.

By repeating the proof of Theorem 3.8 from \cite{lit7a}, we establish the following proposition on isomorphisms of $\gamma$-homogeneous $\mathcal{A}$-modules.

\begin{proposition}\label{art9_teor_3_13a}
If $X$ and $Y$ are $\gamma$-homogeneous $\mathcal{A}$-modules, then $X$ and $Y$ are isomorphic.
\end{proposition}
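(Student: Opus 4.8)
The plan is to transport the relabeling isomorphism between the two Hamel basises from the linear hulls to the whole modules by means of mixing. Let $\{x_i\}_{i\in I}$ and $\{y_j\}_{j\in J}$ be $\mathcal{A}$-Hamel basises of $X$ and $Y$ respectively, with $\mathrm{card}\,I=\mathrm{card}\,J=\gamma$, and fix a bijection $\sigma\colon I\to J$. On the linear hull I would first define $U_0\colon\mathrm{Lin}(\{x_i\}_{i\in I},\mathcal{A})\to\mathrm{Lin}(\{y_j\}_{j\in J},\mathcal{A})$ by $U_0\bigl(\sum_{k}a_k x_{i_k}\bigr)=\sum_{k}a_k y_{\sigma(i_k)}$. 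The $\mathcal{A}$-linear independence of $\{x_i\}_{i\in I}$ guarantees that $U_0$ is well defined (if $\sum_k(a_k-b_k)x_{i_k}=0$ then all $a_k=b_k$), and the same argument applied to $\sigma^{-1}$ shows that $U_0$ is an $\mathcal{A}$-module isomorphism onto $\mathrm{Lin}(\{y_j\}_{j\in J},\mathcal{A})$. In particular $U_0(eu)=eU_0(u)$ for every $e\in\nabla$ and $u\in\mathrm{Lin}(\{x_i\}_{i\in I},\mathcal{A})$.

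Next I would extend $U_0$ to a map $U\colon X\to Y$. Since $\{x_i\}_{i\in I}$ is a Hamel basis, $X=\mathrm{mix}(\mathrm{Lin}(\{x_i\}_{i\in I},\mathcal{A}))$, so each $x\in X$ can be written as $x=\underset{\alpha}{\mathrm{mix}}(e_\alpha u_\alpha)$ for some partition of unity $\{e_\alpha\}$ and $u_\alpha\in\mathrm{Lin}(\{x_i\}_{i\in I},\mathcal{A})$. I would then set $U(x)=\underset{\alpha}{\mathrm{mix}}(e_\alpha U_0(u_\alpha))$, which exists because $Y$ is $l$-complete.

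The main obstacle is to verify that $U(x)$ does not depend on the chosen partition and representatives. Given a second representation $x=\underset{\beta}{\mathrm{mix}}(g_\beta v_\beta)$, I would pass to the common refinement $\{e_\alpha g_\beta\}$: from $e_\alpha g_\beta x=e_\alpha g_\beta u_\alpha=e_\alpha g_\beta v_\beta$ and $u_\alpha,v_\beta\in\mathrm{Lin}(\{x_i\}_{i\in I},\mathcal{A})$ it follows that $e_\alpha g_\beta u_\alpha=e_\alpha g_\beta v_\beta$ as elements of the linear hull, hence $U_0(e_\alpha g_\beta u_\alpha)=U_0(e_\alpha g_\beta v_\beta)$, and the identity $U_0(eu)=eU_0(u)$ gives $e_\alpha g_\beta U_0(u_\alpha)=e_\alpha g_\beta U_0(v_\beta)$. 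By the uniqueness of mixing in the regular module $Y$, the two candidate values of $U(x)$ coincide. The same refinement technique, together with Proposition~\ref{art9_utv_2_6}, shows that $U$ is additive and satisfies $U(ax)=aU(x)$ for $a\in\mathcal{A}$: one chooses a common partition subordinate to representations of $x$, $x'$ and $ax$, applies the $\mathcal{A}$-linearity of $U_0$ on each piece, and again invokes uniqueness of mixing.

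Finally I would establish bijectivity. Injectivity follows from regularity: if $U(x)=0$, then $e_\alpha U_0(u_\alpha)=0$ for every $\alpha$, whence $U_0(e_\alpha u_\alpha)=0$, and injectivity of $U_0$ forces $e_\alpha u_\alpha=0$, so $e_\alpha x=0$ for all $\alpha$ and therefore $x=0$. For surjectivity I would run the identical construction with $\sigma^{-1}$ to obtain a map $V\colon Y\to X$ and check $V\circ U=\mathrm{id}_X$ and $U\circ V=\mathrm{id}_Y$ first on the linear hulls and then on all of $X$ and $Y$ by uniqueness of mixing; alternatively, since $U$ restricts to the surjection $U_0$ onto $\mathrm{Lin}(\{y_j\}_{j\in J},\mathcal{A})$ and commutes with mixing, its image equals $\mathrm{mix}(\mathrm{Lin}(\{y_j\}_{j\in J},\mathcal{A}))=Y$. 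Thus $U$ is the desired $\mathcal{A}$-module isomorphism between $X$ and $Y$.
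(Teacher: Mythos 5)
Your argument is correct and is exactly the standard basis-transport construction: relabel the two $\mathcal{A}$-Hamel basises via a bijection, get an $\mathcal{A}$-module isomorphism of the linear hulls, and extend it to the whole modules by mixing, using regularity for well-definedness and injectivity and $l$-completeness for surjectivity. The paper does not print a proof but delegates to Theorem 3.8 of the cited work on laterally complete $C_\infty(Q)$-modules, which proceeds in essentially this same way, so your writeup simply supplies the details the paper omits.
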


Let us give examples of $\gamma$-homogeneous $\mathcal{A}$-modules for an arbitrary cardinal number $\gamma$ and for any $l$-complete commutative regular untaly algebra $\mathcal{A}$. Consider an arbitrary set of indexes $I$ with $\mathrm{card}\,I = \gamma$. Since the algebra $\mathcal{A}$ is $l$-complete, then the Cartesian product
\[ Y = \prod\limits_{i \in I} \mathcal{A} = \{\hat{\alpha} = \{\alpha_i\}_{i \in I}: \alpha_i \in \mathcal{A}, i \in I\} \]
is a $l$-complete $\mathcal{A}$-module with coordinate-wise algebraic operations.

For any $j \in I$ consider an element $\hat{g_j} = \{g_i^{(j)}\}_{i \in I}$ from $Y$, where $g_i^{(j)} = 0$, $i \ne j$ and $g_i^{(i)} = \mathbf{1}$, $i \in I$. Clearly, that the set $\{\hat{g_j}\}_{j \in I}$ is $\mathcal{A}$-linearly independent, and, therefore, the $\mathcal{A}$-submodule $X = \mathrm{mix}\,(\mathrm{Lin}\,(\{\hat{g}_j\}_{j \in I}, \mathcal{A}))$ in $Y$ is a $\gamma$-homogeneous $\mathcal{A}$-module.

If $\gamma$ is a positive integer $n$, then for the faithful $l$-complete $\mathcal{A}$-module $Y = \prod\limits_{i=1}^n \mathcal{A} = \mathcal{A}^n$ and for $\hat{g_j} = \{g_i^{(j)}\}_{i=1}^n$, $j=1, \ldots, n$ we have that $\mathrm{Lin}\,(\{\hat{g}_j\}_{j=1}^n, \mathcal{A}) = Y$, i.e. the set $\{\hat{g}_j\}_{j=1}^n$ is an $\mathcal{A}$-Hamel basis in $Y$. Thus, Proposition \ref{art9_teor_3_13a} implies the following

\begin{corollary}\label{art9_teor_3_14}
For any positive integer $n$ there exists a unique, up to isomorphism, $n$-homogeneous $\mathcal{A}$-module, which is isomorphic to $\mathcal{A}^n$.
\end{corollary}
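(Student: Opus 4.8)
The plan is to establish the two assertions packaged in the statement — existence of an $n$-homogeneous $\mathcal{A}$-module and its uniqueness up to isomorphism — by assembling facts already prepared in the surrounding text. The existence half is witnessed concretely by $\mathcal{A}^n$, and the uniqueness half is a direct application of Proposition \ref{art9_teor_3_13a} with $\gamma = n$.

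For existence I would verify that the family $\{\hat g_j\}_{j=1}^n$ of standard idempotent-coordinate vectors in $Y = \prod_{i=1}^n \mathcal{A} = \mathcal{A}^n$ is an $\mathcal{A}$-Hamel basis of cardinality $n$. First, it is $\mathcal{A}$-linearly independent: evaluating $\sum_k a_k \hat g_{i_k} = 0$ coordinate-wise forces each $a_k = 0$. Second, every $\hat\alpha = \{\alpha_i\}_{i=1}^n$ in $Y$ equals $\sum_{i=1}^n \alpha_i \hat g_i$, so $\mathrm{Lin}(\{\hat g_j\}_{j=1}^n, \mathcal{A}) = Y$. Since $\mathcal{A}^n$ is $l$-complete it coincides with its own cyclic hull, whence $\mathrm{mix}(\mathrm{Lin}(\{\hat g_j\}_{j=1}^n, \mathcal{A})) = \mathrm{mix}(Y) = Y$. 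Thus $\{\hat g_j\}_{j=1}^n$ is an $\mathcal{A}$-Hamel basis (in fact an $\mathcal{A}$-basis, by Proposition \ref{art9_utv_3_3}), and $\mathcal{A}^n$ is $n$-homogeneous.

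For uniqueness up to isomorphism I would take an arbitrary $n$-homogeneous $\mathcal{A}$-module $X$. Both $X$ and $\mathcal{A}^n$ are $n$-homogeneous, the latter by the previous step. Applying Proposition \ref{art9_teor_3_13a} with $\gamma = n$ yields an isomorphism $X \cong \mathcal{A}^n$. Consequently any two $n$-homogeneous modules are each isomorphic to $\mathcal{A}^n$, hence to one another, which is precisely the claimed uniqueness.

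The genuine mathematical weight does not reside in this corollary but in the results it invokes: chiefly Proposition \ref{art9_teor_3_13a}, whose proof is deferred to the analogue in \cite{lit7a}, and Theorem \ref{art9_teor_4_2a}, which guarantees that the cardinality of an $\mathcal{A}$-basis is a well-defined invariant so that the label ``$n$-homogeneous'' is unambiguous. Granting these, the corollary is bookkeeping; the single point that warrants care is confirming that the finite standard family is honestly a Hamel basis, where one relies on $\mathrm{mix}(\mathcal{A}^n) = \mathcal{A}^n$ coming from $l$-completeness rather than on the subtler mixing argument required at infinite rank.
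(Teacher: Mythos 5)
Your proposal is correct and follows essentially the same route as the paper: exhibit the standard coordinate vectors $\{\hat g_j\}_{j=1}^n$ as an $\mathcal{A}$-basis of $\mathcal{A}^n$ (so that $\mathcal{A}^n$ is $n$-homogeneous) and then invoke Proposition \ref{art9_teor_3_13a} for uniqueness up to isomorphism. Your additional remarks on $\mathrm{mix}(\mathcal{A}^n)=\mathcal{A}^n$ and on Theorem \ref{art9_teor_4_2a} making the label ``$n$-homogeneous'' unambiguous are accurate but do not change the argument.
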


Let $X$ be a faithful $l$-complete $\mathcal{A}$-module, which is $\gamma$-homogeneous and $\lambda$-homogeneous simultaneously. There is a natural question, whether in this case the equality $\gamma = \lambda$ holds. Similar question was studied in classification of Kaplansky-Hilbert modules (KHM) $X$ over a commutative $AW^\ast$-algebra $\mathcal{A}$ with the Boolean algebra of projections $\nabla$ (see \cite{lit10}). In the case, when $\nabla$ is a multi-$\sigma$-finite Boolean algebra in \cite{lit10} it is proved that for a KHM $X$ the equality $\lambda = \gamma$ is always true. However, for an arbitrary complete Boolean algebra $\nabla$ this equality cannot be established. Thereby, in (\cite{lit5}, 7.4.6) the notion of \emph{strictly $\gamma$-homogeneous} KHM $X$ is defined, and this gave an opportunity to classify KHM $X$ over an arbitrary commutative $AW^\ast$-algebra $\mathcal{A}$. For the same reason, below we introduce the notion of strictly $\gamma$-homogeneous faithful $l$-complete modules over laterally complete algebras $\mathcal{A}$. With this notion we obtain necessary and sufficient conditions for $l$-complete $\mathcal{A}$-modules to be isomorphic.

Let $X$ be a faithful $l$-complete $\mathcal{A}$-module, $0 \ne e \in \nabla$. By $\varkappa(e) = \varkappa_X(e)$ we denote the smallest cardinal number $\gamma$ such that the $\mathcal{A}_e$-module $X_e$ is $\gamma$-homogeneous. If the $\mathcal{A}$-module $X$ is homogeneous, then the cardinal number $\varkappa(e)$ is defined for all nonzero $e\in\nabla$. Further, by (\cite{lit5}, 7.4.7), we assume that $\varkappa(0) = 0$.

We say that an $\mathcal{A}$-module $X$ is \emph{strictly $\gamma$-homogeneous} (compare with \cite{lit5}, 7.4.6), if $X$ is $\gamma$-homogeneous and $\gamma = \varkappa(e)$ for all nonzero $e\in\nabla$. If an $\mathcal{A}$-module $X$ is strictly $\gamma$-homogeneous for some cardinal number $\gamma$, then such $\mathcal{A}$-module $X$ is called \emph{strictly homogeneous}.

Clearly, any strictly $\gamma$-homogeneous $\mathcal{A}$-module is a $\gamma$-homogeneous $\mathcal{A}$-module. By Lemma \ref{art9_lemma_3_3} it follows that every $n$-homogeneous $\mathcal{A}$-module $X$ is a strictly $n$-homogeneous module. By Proposition \ref{art9_utv_2_6} ($iv$) every $\mathcal{A}$-module $Y$, which is isomorphic to a strictly $\gamma$-homogeneous $\mathcal{A}$-module $X$, is also strictly $\gamma$-homogeneous.

The following theorem holds.

\begin{theorem}\label{art9_teor_3_9}
Let $\lambda$ and $\gamma$ be infinite cardinal numbers and let the Boolean algebra $\nabla$ of all idempotents in a $l$-complete commutative regular algebra $\mathcal{A}$ has countable type. If a faithful $l$-complete $\mathcal{A}$-module $X$ is $\lambda$-homogeneous and $\gamma$-homogeneous simultaneously, then $\gamma = \lambda$.
\end{theorem}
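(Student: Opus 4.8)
We have a laterally complete commutative regular algebra $\mathcal{A}$ whose Boolean algebra $\nabla$ of idempotents is complete and of countable type (σ-finite). We have a faithful $l$-complete module $X$ that is both $\lambda$-homogeneous and $\gamma$-homogeneous, with $\lambda, \gamma$ infinite cardinals. We want to show $\lambda = \gamma$.

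So $X$ has two $\mathcal{A}$-Hamel bases, one of cardinality $\lambda$, one of cardinality $\gamma$. For finite bases, Theorem 4.2 already gives equal cardinality. The new difficulty is infinite cardinals, where the linear-algebra exchange argument (Lemma 3.3) doesn't directly bound infinite cardinals.

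**Recall the classical situation.** For ordinary vector spaces over a field, any two bases of an infinite-dimensional space have the same cardinality. The proof: for an infinite basis $\{x_i\}_{i \in I}$ and another basis $\{y_j\}_{j \in J}$, each $y_j$ is a finite linear combination of finitely many $x_i$'s, giving a map $j \mapsto$ (finite subset of $I$). Since the $x_i$ are needed (the $y_j$ span), $I = \bigcup_j (\text{support of } y_j)$, so $|I| \le |J| \cdot \aleph_0 = |J|$ (using $|J|$ infinite). By symmetry $|J| \le |I|$, hence equality via Cantor-Schröder-Bernstein.

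**The module-theoretic obstacle.** Here "Hamel basis" involves mixing: $X = \text{mix}(\text{Lin}(\{x_i\}, \mathcal{A}))$. So an element $y_j$ need NOT be a single finite $\mathcal{A}$-combination of the $x_i$; instead, there's a partition of unity $\{e_k\}$ and on each piece $e_k y_j$ is a finite combination. So the "support set" of $y_j$ in $I$ is not finite globally — it could be a countable union of finite sets (if we use countably many pieces), hence countable.

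This is exactly where the countable-type (σ-finite) hypothesis enters. In a σ-finite Boolean algebra, any partition of unity into nonzero pieces is at most countable. So for each $y_j$, the partition realizing $y_j$ as a mixing has at most countably many pieces, each contributing a finite subset of $I$. Therefore the total set of indices $i \in I$ "used" by $y_j$ is at most countable.

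**My plan.**

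First, I would reduce to a statement about supports. For each $j \in J$ (the $\gamma$-indexing set, $|J| = \gamma$), express $y_j$ via a partition of unity $\{e_k^{(j)}\}_{k \in K_j}$ with each $e_k^{(j)} y_j \in e_k^{(j)} \text{Lin}(\{x_i\}, \mathcal{A})$. Since $\nabla$ is σ-finite, we may take $K_j$ countable. Define $S_j \subseteq I$ as the (countable) set of indices $i$ such that $x_i$ appears with nonzero coefficient in some piece. Each $S_j$ is countable.

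Second, I would prove $I = \bigcup_{j \in J} S_j$, i.e., every $x_i$ is "reached" by the $y_j$'s. This is the step needing care. The $\{y_j\}$ form a Hamel basis, so in particular $x_i \in \text{mix}(\text{Lin}(\{y_j\}, \mathcal{A}))$. If some index $i_0$ were used by no $y_j$, I'd argue that $x_{i_0}$ cannot lie in $\text{mix}(\text{Lin}(\{y_j\}))$ on any nonzero idempotent, contradicting that $\{y_j\}$ spans. The technical point: on each piece of any mixing, $x_{i_0}$ would have to be a finite $\mathcal{A}$-combination of finitely many $y_j$'s, each of which is (on a finer piece) a finite combination of $x_i$'s with $i \in \bigcup S_j$; so $x_{i_0}$ becomes a combination of $\{x_i : i \neq i_0\}$ on a nonzero piece, violating $\mathcal{A}$-linear independence of $\{x_i\}$ (restricted to that idempotent).

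Third, the cardinality count: $|I| = |\bigcup_j S_j| \le |J| \cdot \aleph_0 = \gamma \cdot \aleph_0 = \gamma$ since $\gamma$ is infinite. But $|I| = \lambda$, so $\lambda \le \gamma$. By symmetry (swapping roles of the two bases) $\gamma \le \lambda$. Therefore $\lambda = \gamma$.

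**Main obstacle.** The crux is the second step — showing $\bigcup_j S_j = I$ — and specifically that "passing through mixings and finite combinations in both directions" does not secretly let $x_{i_0}$ be expressed without itself. I'd want a clean lemma: if $x \in \text{mix}(\text{Lin}(\{y_j\}))$ and each $y_j$, on a partition, is a finite combination of $\{x_i\}_{i \in S}$, then on each piece $x$ is a finite combination of $\{x_i\}_{i \in S}$ (refining partitions via Proposition 2.6(i) and using that $\mathcal{A}$ is $l$-complete to amalgamate coefficients as in Proposition 3.3). Then $\mathcal{A}$-linear independence of $\{x_i\}$ forces the support of $x$ to sit inside $S$ on that piece. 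Managing the bookkeeping of nested partitions of unity — and invoking σ-finiteness at the right moment to keep everything countable — is the delicate part; the cardinal arithmetic at the end is routine.

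Here is how I would write the argument:

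\begin{proof}
Let $\{x_i\}_{i \in I}$ and $\{y_j\}_{j \in J}$ be $\mathcal{A}$-Hamel bases in $X$ with $\mathrm{card}\,I = \lambda$ and $\mathrm{card}\,J = \gamma$. It suffices to prove $\lambda \leq \gamma$; the reverse inequality follows by symmetry, and then $\lambda = \gamma$ by the Cantor--Schr\"oder--Bernstein theorem.

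Fix $j \in J$. Since $\{x_i\}_{i \in I}$ is an $\mathcal{A}$-Hamel basis and $y_j \in X = \mathrm{mix}(\mathrm{Lin}(\{x_i\}_{i \in I}, \mathcal{A}))$, there exist a partition of unity $\{e_k\}_{k \in K_j}$ in $\nabla$ and elements $u_k \in \mathrm{Lin}(\{x_i\}_{i \in I}, \mathcal{A})$ such that $e_k y_j = e_k u_k$ for all $k \in K_j$. As $\nabla$ has countable type, we may assume that $K_j$ is at most countable. Each $u_k$ is a finite $\mathcal{A}$-combination of the $x_i$; let $F_k \subset I$ be the finite set of indices occurring in $u_k$, and put $S_j = \bigcup_{k \in K_j} F_k$. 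Then $S_j$ is at most countable, and for every $k \in K_j$ we have $e_k y_j \in e_k \mathrm{Lin}(\{x_i\}_{i \in S_j}, \mathcal{A})$.

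We claim that $I = \bigcup_{j \in J} S_j$. Suppose not, and fix $i_0 \in I \setminus \bigcup_{j \in J} S_j$. Since $\{y_j\}_{j \in J}$ is an $\mathcal{A}$-Hamel basis, $x_{i_0} \in \mathrm{mix}(\mathrm{Lin}(\{y_j\}_{j \in J}, \mathcal{A}))$, so there exist a partition of unity $\{g_m\}_{m \in M}$ in $\nabla$ and a nonzero $g := g_{m_0}$ on which
\[
g x_{i_0} = \sum_{t=1}^{n} a_t\, g y_{j_t}, \qquad a_t \in \mathcal{A},\ j_t \in J.
\]
Let $S = \bigcup_{t=1}^n S_{j_t}$, a countable subset of $I$ with $i_0 \notin S$. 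For each $t$, using the partition $\{e_k\}_{k \in K_{j_t}}$ from above and refining by $g$, we have $g e_k y_{j_t} \in g e_k \mathrm{Lin}(\{x_i\}_{i \in S}, \mathcal{A})$ for all $k$. Let $\{h_r\}_{r \in R}$ be the common refinement of $g$ and all the partitions $\{e_k\}_{k \in K_{j_t}}$, $t = 1, \ldots, n$; this is a partition of $g$, and on each $h_r$ every $g y_{j_t}$ is a finite $\mathcal{A}$-combination of $\{x_i\}_{i \in S}$. Hence $h_r x_{i_0} \in h_r \mathrm{Lin}(\{x_i\}_{i \in S}, \mathcal{A})$ for every $r \in R$, say
\[
h_r x_{i_0} = \sum_{i \in S} b_i^{(r)}\, h_r x_i,
\]
with finitely many nonzero coefficients $b_i^{(r)} \in \mathcal{A}$. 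Since $\mathcal{A}$ is $l$-complete and $\{h_r\}_{r \in R}$ is a partition of $g$, for each $i \in S$ there is a unique $\beta_i \in \mathcal{A}$ with $h_r \beta_i = h_r b_i^{(r)}$ for all $r$; moreover $\beta_i g = \beta_i$. Then for all $r$,
\[
h_r\Bigl( g x_{i_0} - \sum_{i \in S} \beta_i x_i \Bigr) = h_r x_{i_0} - \sum_{i \in S} h_r \beta_i x_i = 0,
\]
and regularity of $X$ gives $g x_{i_0} = \sum_{i \in S} \beta_i x_i$. As $i_0 \notin S$ and $g \neq 0$, this is a nontrivial $\mathcal{A}$-linear relation among $\{x_i\}_{i \in I}$ (the coefficient of $x_{i_0}$ is $g \neq 0$), contradicting the $\mathcal{A}$-linear independence of $\{x_i\}_{i \in I}$. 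This proves $I = \bigcup_{j \in J} S_j$.

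Finally, since each $S_j$ is at most countable and $\gamma$ is infinite,
\[
\lambda = \mathrm{card}\,I = \mathrm{card}\Bigl(\bigcup_{j \in J} S_j\Bigr) \leq \mathrm{card}\,J \cdot \aleph_0 = \gamma \cdot \aleph_0 = \gamma.
\]
By symmetry $\gamma \leq \lambda$, and therefore $\gamma = \lambda$.
\end{proof}
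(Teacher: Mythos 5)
The paper does not actually prove Theorem \ref{art9_teor_3_9}; it defers to the proof of Theorem 3.4 of \cite{lit7a}. Your argument is the natural adaptation of the classical ``count the supports of the basis vectors'' proof to the mixing setting, with the countable-type hypothesis invoked exactly where it is needed (to make each $S_j$ countable), and the overall strategy --- the reduction to $I=\bigcup_{j}S_j$, the common refinement of partitions, and the final cardinal arithmetic --- is sound and is surely the same route as the cited proof.

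There is, however, one step that as written does not make sense and should simply be deleted rather than repaired: the amalgamated identity $g x_{i_0}=\sum_{i\in S}\beta_i x_i$. The set $S$ is countably infinite, and across the countably many pieces $h_r$ infinitely many of the $\beta_i$ may be nonzero; the module $X$ carries no notion of infinite sums, and $\mathcal{A}$-linear independence is defined only through finite combinations, so the displayed ``nontrivial $\mathcal{A}$-linear relation'' is not a relation in the sense of the definition. Fortunately the contradiction is already available one line earlier, before any amalgamation: choose a nonzero piece $h_r$ (one exists, since $\sup_{r}h_r=g\neq 0$); then $h_r x_{i_0}-\sum_{i\in F}h_r b_i^{(r)}x_i=0$ is a \emph{finite} $\mathcal{A}$-linear combination of elements of $\{x_i\}_{i\in I}$ in which $x_{i_0}$ occurs with coefficient $h_r\neq 0$ and $i_0\notin F\subset S$, contradicting the $\mathcal{A}$-linear independence of $\{x_i\}_{i\in I}$ directly. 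With that one-line substitution your proof is complete and correct.
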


\emph{Proof} of Theorem \ref{art9_teor_3_9} is similar to that of Theorem 3.4 in \cite{lit7a}.

Using Theorem \ref{art9_teor_3_9} to the $\mathcal{A}_e$-module $X_e$, we have, that Theorem \ref{art9_teor_3_9} holds in the case, when in the Boolean algebra $\nabla$ of idempotents in $\mathcal{A}$ there exists nonzero element $e$, which has a countable type. Thus, repeating the proof of Corollary 3.7 in \cite{lit7a}, we obtain the following necessary and sufficient conditions for coincidence of strictly $\gamma$-homogeneous and $\gamma$-homogeneous notions for $\mathcal{A}$-modules.

\begin{proposition}\label{art9_utv_4_8a}
Let a Boolean algebra $\nabla$ of all idempotents on a $l$-complete commutative regular algebra $\mathcal{A}$ be multi-$\sigma$-finite. If $\gamma$ is an infinite cardinal number and $X$ is a $\gamma$-homogeneous $\mathcal{A}$-module, then the module $X$ is strictly $\gamma$-homogeneous.
\end{proposition}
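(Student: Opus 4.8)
The plan is to verify directly that $\gamma = \varkappa(e)$ for every nonzero $e \in \nabla$, since this, together with the assumed $\gamma$-homogeneity of $X$, is exactly the definition of strict $\gamma$-homogeneity. First I would record the trivial half of the equality: because $X$ is $\gamma$-homogeneous, its restriction $X_e = eX$ is a $\gamma$-homogeneous $\mathcal{A}_e$-module for every nonzero $e$, so by the definition of $\varkappa(e)$ as the \emph{least} cardinal with this property we get $\varkappa(e) \leq \gamma$ at once. The entire content is therefore to exclude the possibility that $X_e$ is $\lambda$-homogeneous for some cardinal $\lambda < \gamma$. Fixing a hypothetical such $\lambda$ and a nonzero $e$, I would derive a contradiction, splitting the argument according to whether $\lambda$ is finite or infinite.

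If $\lambda$ is finite, then $X_e$ admits a finite $\mathcal{A}_e$-basis $\{u_1, \ldots, u_\lambda\}$, and by Proposition \ref{art9_utv_3_3} (cf.\ Corollary \ref{art9_teor_3_14}) we have $X_e = \mathrm{mix}(\mathrm{Lin}(\{u_1, \ldots, u_\lambda\}, \mathcal{A}_e)) = \mathrm{Lin}(\{u_1, \ldots, u_\lambda\}, \mathcal{A}_e)$. On the other hand, the $\gamma$-homogeneity of $X_e$ supplies an $\mathcal{A}_e$-Hamel basis of cardinality $\gamma \geq \aleph_0 > \lambda$; selecting any $\lambda + 1$ of its members gives an $\mathcal{A}_e$-linearly independent family contained in $\mathrm{Lin}(\{u_1, \ldots, u_\lambda\}, \mathcal{A}_e)$, and Lemma \ref{art9_lemma_3_3}, applied with $e$ itself as the idempotent, forces $\lambda + 1 \leq \lambda$, a contradiction.

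If $\lambda$ is infinite, I would invoke the multi-$\sigma$-finiteness of $\nabla$ to choose a nonzero idempotent $g \leq e$ for which the Boolean algebra $\nabla_g$ has countable type. Restricting once more, $X_g = gX$ is simultaneously a $\gamma$-homogeneous and a $\lambda$-homogeneous $\mathcal{A}_g$-module, both exponents being infinite, since passing from a $\mu$-homogeneous module over $\mathcal{A}_e$ to its image under the idempotent $g$ preserves the homogeneity degree. The countable-type version of Theorem \ref{art9_teor_3_9}, recorded in the remark following that theorem's statement, then yields $\gamma = \lambda$, contradicting $\lambda < \gamma$. Having ruled out both subcases, I conclude that no $\lambda < \gamma$ can occur, so $\varkappa(e) = \gamma$ for every nonzero $e \in \nabla$, and hence $X$ is strictly $\gamma$-homogeneous.

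The main obstacle is the infinite subcase: the inequality $\varkappa(e) \leq \gamma$ and the finite subcase are essentially bookkeeping built on Lemma \ref{art9_lemma_3_3}, but eliminating a strictly smaller \emph{infinite} $\lambda$ genuinely requires the rigidity of Theorem \ref{art9_teor_3_9}, which is only available over a Boolean algebra of countable type. Multi-$\sigma$-finiteness is precisely the hypothesis that permits localizing below any prescribed $e$ to an idempotent $g$ of countable type, so the crux is the controlled passage $X_e \rightsquigarrow X_g$ together with the invariance of the homogeneity degree under this localization.
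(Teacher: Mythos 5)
Your proof is correct and follows the same route the paper intends (the paper itself only defers to Theorem \ref{art9_teor_3_9} and the proof of Corollary 3.7 in \cite{lit7a}): establish $\varkappa(e)\leq\gamma$ by restriction, kill a finite $\varkappa(e)$ via Lemma \ref{art9_lemma_3_3}, and kill an infinite $\varkappa(e)<\gamma$ by localizing, via multi-$\sigma$-finiteness, to a countable-type idempotent $g\leq e$ where Theorem \ref{art9_teor_3_9} applies to $X_g$. No gaps; the key steps (faithfulness and $l$-completeness of $X_g$ over $\mathcal{A}_g$, preservation of homogeneity degree under restriction) are all justified by statements already in the paper.
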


\begin{sloppypar}
The following proposition enables to ``glue'' $\gamma$-homogeneous (strictly $\gamma$-homogeneous) $\mathcal{A}$-modules.
\end{sloppypar}

\begin{proposition}\label{art9_utv_4_9a}
Let $\mathcal{A}$ be a $l$-complete commutative regular algebra, let $X$ be a $l$-complete $\mathcal{A}$-module and let $\{e_i\}_{i \in I}$ be a set of pairwise disjoint nonzero idempotents in $\mathcal{A}$ and $e=\sup\limits_{i \in I}e_i$. If $ X_{e_i}$ is a $\gamma$-homogeneous (respectively, strictly $\gamma$-homogeneous) $\mathcal{A}_{e_i}$-module for all $i \in I$, then the $\mathcal{A}_e$-module $X_e$ is also $\gamma$-homogeneous (respectively, strictly $\gamma$-homogeneous).
\end{proposition}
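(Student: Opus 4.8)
The plan is to build an $\mathcal{A}_e$-Hamel basis for $X_e$ by gluing together the given $\mathcal{A}_{e_i}$-Hamel bases of the pieces $X_{e_i}$, and then to read off strict homogeneity directly from the definition of the cardinal function $\varkappa$. Throughout I would work inside the $l$-complete $\mathcal{A}_e$-module $X_e$, where $e$ is the unit of $\mathcal{A}_e$ and $\{e_i\}_{i \in I}$ is a partition of unity of the complete Boolean algebra $\nabla_e$.

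First, since each $X_{e_i}$ is $\gamma$-homogeneous, I would fix a single index set $\Lambda$ with $\mathrm{card}\,\Lambda = \gamma$ and, for every $i \in I$, an $\mathcal{A}_{e_i}$-Hamel basis $\{x_\alpha^{(i)}\}_{\alpha \in \Lambda}$ of $X_{e_i} = e_i X$. Using $l$-completeness of $X_e$ I would glue these families coordinate-wise: for each $\alpha \in \Lambda$ set
\[
x_\alpha = \underset{i \in I}{\mathrm{mix}}\,(e_i x_\alpha^{(i)}) \in X_e,
\]
so that $e_i x_\alpha = x_\alpha^{(i)}$ for all $i \in I$. The claim to be proved is that $\{x_\alpha\}_{\alpha \in \Lambda}$ is an $\mathcal{A}_e$-Hamel basis of $X_e$, which immediately gives $\gamma$-homogeneity of $X_e$.

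For $\mathcal{A}_e$-linear independence I would take a relation $\sum_{k=1}^n a_k x_{\alpha_k} = 0$ with $a_k \in \mathcal{A}_e$ and multiply by $e_i$; using $e_i x_{\alpha_k} = x_{\alpha_k}^{(i)}$ and the $\mathcal{A}_{e_i}$-linear independence of $\{x_\alpha^{(i)}\}$ yields $e_i a_k = 0$ for every $i$, whence $a_k = 0$ by Proposition \ref{art9_utv_2_25} applied in $\mathcal{A}_e$. For the generating property I would invoke the characterization in Proposition \ref{art9_utv_3_2}: given $x \in X_e$ and a nonzero $g \le e$, the equality $\sup_i e_i = e$ forces $g e_{i_0} \ne 0$ for some $i_0$; applying condition $(ii)$ of Proposition \ref{art9_utv_3_2} to the $l$-complete $\mathcal{A}_{e_{i_0}}$-module $X_{e_{i_0}}$, to its element $g e_{i_0} x$ and to the idempotent $g e_{i_0}$, produces a nonzero $q \le g e_{i_0}$ with $qx = q(g e_{i_0} x) \in q\,\mathrm{Lin}(\{x_\alpha^{(i_0)}\}, \mathcal{A}_{e_{i_0}})$. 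The essential observation is that $q \le e_{i_0}$ gives $q x_\alpha = q x_\alpha^{(i_0)}$, so $q\,\mathrm{Lin}(\{x_\alpha^{(i_0)}\}, \mathcal{A}_{e_{i_0}}) \subset q\,\mathrm{Lin}(\{x_\alpha\}, \mathcal{A}_e)$; hence $qx \in q\,\mathrm{Lin}(\{x_\alpha\}, \mathcal{A}_e)$ with $q \le g$, which is exactly condition $(ii)$ for $X_e$.

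Finally, for the strict case I would argue with the cardinal function $\varkappa$. Since $X_e$ is already $\gamma$-homogeneous, $\varkappa_{X_e}(h) \le \gamma$ for every nonzero $h \le e$. For the reverse inequality I would suppose some nonzero $h \le e$ had $\varkappa_{X_e}(h) = \delta < \gamma$, so that $X_h$ is $\delta$-homogeneous; choosing $i_0$ with $g_0 := h e_{i_0} \ne 0$ and restricting, the module $X_{g_0} = g_0 X$ would be $\delta$-homogeneous, which contradicts $\varkappa_{X_{e_{i_0}}}(g_0) = \gamma$, valid because $X_{e_{i_0}}$ is strictly $\gamma$-homogeneous. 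Hence $\varkappa_{X_e}(h) = \gamma$ for all nonzero $h \le e$, so $X_e$ is strictly $\gamma$-homogeneous. The main obstacle I anticipate is the bookkeeping in the gluing step, specifically verifying the identity $q x_\alpha = q x_\alpha^{(i_0)}$ and its consequence for $\mathrm{Lin}$, since once this is in hand everything else reduces cleanly to Propositions \ref{art9_utv_2_25} and \ref{art9_utv_3_2} together with the definition of $\varkappa$.
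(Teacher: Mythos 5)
Your proof is correct and follows the route the paper intends (the paper omits the argument, deferring to Proposition 3.10 of \cite{lit7a}, which is exactly this gluing of Hamel bases by mixing over the partition $\{e_i\}_{i\in I}$, verification via Proposition \ref{art9_utv_3_2}, and reduction of strictness to the behaviour of $\varkappa$ on idempotents $h e_{i_0}\ne 0$). The only detail worth adding explicitly is that $X_e$ is a faithful $\mathcal{A}_e$-module, which follows at once from faithfulness of each $X_{e_i}$.
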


\emph{Proof} is similar to that of Proposition 3.10 in \cite{lit7a}.

\section{Classification of faithful $l$-complete $\mathcal{A}$-modules}
\label{sec:4}
In this section it is proved that every faithful laterally complete $\mathcal{A}$-module is isomorphic to a Cartesian product of strictly homogeneous $\mathcal{A}$-modules. The important step in obtaining such an isomorphism is the following theorem.

\begin{theorem}\label{art9_teor_5_1}
Let $\mathcal{A}$ be a $l$-complete commutative regular algebra, let $\nabla$ be a Boolean algebra of all idempotents in $\mathcal{A}$ and let $X$ be a faithful $l$-complete $\mathcal{A}$-module. Then there exists a nonzero idempotent $p\in\nabla$ such that $X_p$ is a strictly homogeneous $\mathcal{A}_p$-module.
\end{theorem}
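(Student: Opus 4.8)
The plan is to first produce a nonzero idempotent on which $X$ becomes homogeneous, and then, among all such pieces, to single out one of least homogeneity degree; minimality will automatically upgrade it to strict homogeneity. Throughout, call an $\mathcal{A}_e$-linearly independent family $S \subseteq X_e$ \emph{full-supported} if $s(x) = e$ for all $x \in S$; note that linear independence already forces this, since $(e - s(x))x = 0$ would otherwise be a nontrivial relation. By Proposition \ref{art9_utv_2_7}(i) there is $x_0 \in X$ with $s(x_0) = \mathbf{1}$, so $\{x_0\}$ is a full-supported independent family; since independence is finitary and unions of chains preserve both independence and the supports of their members, Zorn's lemma yields a maximal full-supported $\mathcal{A}$-linearly independent family $S_0 \subseteq X$, of cardinality $\gamma := \mathrm{card}(S_0)$.

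The central step is to show that $S_0$ is \emph{tight} on some nonzero piece, i.e. that there is a nonzero $g \in \nabla$ such that $hS_0 := \{hx : x \in S_0\}$ is still a \emph{maximal} full-supported independent family in $X_h$ for every nonzero $h \leq g$. To this end, let $D$ be the set of nonzero $h \in \nabla$ for which $hS_0$ fails to be maximal in $X_h$. If $D$ were a minorant set for $\nabla$, then Theorem \ref{art9_teor_2_1} would furnish a disjoint family $\{h_k\} \subseteq D$ with $\sup_k h_k = \mathbf{1}$; on each $h_k$ one could extend $h_kS_0$ by some full-supported $z_k \in X_{h_k}$, and lateral completeness would let us form $x_\ast = \underset{k}{\mathrm{mix}}(h_k z_k)$ with $s(x_\ast) = \mathbf{1}$. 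A direct check (multiply a putative relation by each $h_k$ and use regularity) shows $S_0 \cup \{x_\ast\}$ is again full-supported and independent, contradicting maximality of $S_0$. Hence $D$ is not a minorant set, so some nonzero $g$ satisfies $\nabla_g \cap D = \emptyset$, i.e. $(g, gS_0)$ is tight.

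Next I would verify that tightness forces $gS_0$ to be an $\mathcal{A}_g$-Hamel basis of $X_g$ via the criterion of Proposition \ref{art9_utv_3_2}(ii). Given $y \in X_g$ and nonzero $e \leq g$ with $ey \neq 0$, set $e' = s(ey)$, so that $e'y$ is full-supported in $X_{e'}$; maximality of $e'S_0$ (from tightness) produces a nontrivial relation involving $e'y$. The main obstacle throughout is exactly here: one must exclude a \emph{fake} dependence in which the coefficient $a$ of $e'y$ is nonzero but $s(a)$ is disjoint from $e'$ (in which case $a\,e'y = 0$ and the relation says nothing about membership in the span). This is impossible precisely because $e'y$ has full support $e'$, so $s(a)s(e'y) = s(a) \neq 0$, and the relation genuinely yields $s(a)\,y \in s(a)\,\mathrm{Lin}(gS_0, \mathcal{A}_g)$ with $s(a) \leq e$ nonzero. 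This confirms Proposition \ref{art9_utv_3_2}(ii), so $X_g$ is $\gamma$-homogeneous.

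Finally, to reach strict homogeneity I would put $\gamma_1 = \min\{\gamma : X_e \text{ is } \gamma\text{-homogeneous for some nonzero } e \in \nabla\}$, which is well defined by the previous step together with the well-ordering of cardinals, and fix a nonzero $p$ with $X_p$ being $\gamma_1$-homogeneous. For every nonzero $h \leq p$ the module $X_h$ is again $\gamma_1$-homogeneous (a restriction of a $\gamma$-homogeneous module is $\gamma$-homogeneous, as noted after the definition), whence $\varkappa(h) \leq \gamma_1$; minimality of $\gamma_1$ gives the reverse inequality, so $\varkappa(h) = \gamma_1$ for all nonzero $h \leq p$. Thus $X_p$ is strictly $\gamma_1$-homogeneous, which is the assertion. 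The delicate part of the whole argument is the tightness step of the second paragraph, where lateral completeness, the exhaustion principle of Theorem \ref{art9_teor_2_1}, and the support calculus of Propositions \ref{art9_utv_2_3} and \ref{art9_utv_3_3} are combined to turn mere local extendability into a genuine violation of global maximality.
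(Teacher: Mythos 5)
Your proposal is correct and follows essentially the same route as the paper: a Zorn-maximal $\mathcal{A}$-linearly independent family containing an element of full support, an exhaustion argument (Theorem \ref{art9_teor_2_1}) combined with mixing to show that local failure would produce a new element of support $\mathbf{1}$ extending the family and contradicting maximality, localization to a nonzero idempotent where the family is an $\mathcal{A}$-Hamel basis, and finally minimization of $\varkappa$ over sub-idempotents to upgrade homogeneity to strict homogeneity. The only cosmetic difference is that you phrase the local obstruction as failure of maximality of $hS_0$ in $X_h$ rather than, as in the paper, failure of $\mathrm{mix}\,(\mathrm{Lin}\,(D,\mathcal{A}))$ to exhaust $hX$; the two formulations are interchangeable here and lead to the same contradiction.
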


\begin{proof}
Using Proposition \ref{art9_utv_2_7} ($i$), we choose $x_0 \in X$ such that $s(x_0) = \mathbf{1}$. If $X = \mathrm{Lin}(x_0, \mathcal{A})$, then $X$ is a strictly $1$-homogeneous module and Theorem \ref{art9_teor_5_1} is proved.

Assume that $X \ne \mathrm{mix}\,(\{x_0\})$. We consider in $X$ the following nonempty family of subsets
\[ \mathscr{E} = \{B \subset X: x_0 \in B, B - \mathcal{A}\text{-linearly independent set}\}. \]
We introduce in $\mathscr{E}$ a partial order by $B \leq C \Leftrightarrow B \subset C$. By Zorn's lemma there exists maximal element $D$ in $\mathscr{E}$. If $D$ is an $\mathcal{A}$-Hamel basis in $X$, then $X$ is  $(\mathrm{card}\, D)$-homogeneous $\mathcal{A}$-module.

Assume that $X \ne \mathrm{mix}\,(\mathrm{Lin}\,(D, \mathcal{A}))$. If for any nonzero $e\in\nabla$ there exists $0 \ne q_e \in \nabla$ such that $q_e\, \mathrm{mix}\,(\mathrm{Lin}\,(D, \mathcal{A})) = q_e X$, then from Proposition \ref{art9_utv_2_6} ($iii$) and Proposition \ref{art9_utv_2_7} ($ii$) it follows that $X = \mathrm{mix}\,(\mathrm{Lin}\,(D, \mathcal{A}))$, which contradicts our assumption. Hence, there exists nonzero $e\in\nabla$ such that the following condition holds:
\[ g\, \mathrm{mix}\,(\mathrm{Lin}\,(D, \mathcal{A})) \ne gX \ \hbox{for all non zero} \ g \in \nabla_e. \tag{1} \]

Denote by $\mathscr{L}$ a set of all nonzero $e\in\nabla$ with property $(1)$. Put $e_0 = \sup\mathscr{L}$ and show that the equality $e_0 = \mathbf{1}$ fails.

Assume that $e_0 = \mathbf{1}$. In this case for every nonzero $q \in \nabla$ there exists $e\in\mathscr{L}$ such that $g = qe \ne 0$. Hence, $gX \ne g\mathrm{mix}\,(\mathrm{Lin}\,(D, \mathcal{A}))$ (see (1)), which implies
\[ q X \ne q\, \mathrm{mix}\,(\mathrm{Lin}\,(D, \mathcal{A})). \tag{2} \]

Show that for any nonzero $q\in\nabla$ there exists a nonzero idempotent $r \leq q$ such that for any $0 \ne g \in \nabla_r$ the following property holds:
\[ \hbox{There exists } x_g \in gX \hbox{ such that } s(x_g)=g \hbox{ and } \ lx_g \not\in \mathrm{Lin}\,(D, \mathcal{A}) \hbox{ for all } 0 \ne l \in \nabla_g. \tag{3} \]

If this is not true, then there exists a nonzero $q \in\nabla$ such that for every $0 \ne r \in \nabla_q$ there exists a nonzero idempotent $g_r \in \nabla_r$ without property $(3)$, i.e. for any $x \in g_r X$ with $s(x) = g_r$ there exists a nonzero idempotent \ $e(x_g,r) \leq g_r \leq q$ \ such that
$$
e(x_g,r) x \in e(x_g,r) \mathrm{Lin}\,(D, \mathcal{A}) \subset \mathrm{Lin}\,(D, \mathcal{A}).
$$

\begin{sloppypar}
Show that, in this case, $g_q X = g_q \mathrm{mix}\,(\mathrm{Lin}\,(D, \mathcal{A}))$. Let $x$ be a nonzero element in $g_q X$, in particular, $0 \ne s(x) \leq g_q$. For any nonzero idempotent $a \leq s(x)$ there exists a nonzero idempotent $e(ax,a) \leq a$ such that $e(ax,a) x \in \mathrm{Lin}\,(D,\mathcal{A})$. By Theorem \ref{art9_teor_2_1}, there exists a partition $\{e_i\}_{i \in I}$ of support $s(x)$ such that $e_i x \in s(x)\mathrm{Lin}\,(D, \mathcal{A})$ for all $i \in I$. This means that $x \in \mathrm{mix}\,(s(x)\mathrm{Lin}\,(D, \mathcal{A})) = s(x)\mathrm{mix}\,(\mathrm{Lin}\,(D, \mathcal{A}))$ (see Proposition \ref{art9_utv_2_6} $(ii)$). Since $s(x) \leq g_q$, we have that $x \in g_q \mathrm{mix}\,(\mathrm{Lin}\,(D, \mathcal{A}))$, which implies the inclusion $g_q X \subset g_q \mathrm{mix}\,(\mathrm{Lin}\,(D, \mathcal{A}))$. On the other hand, by $l$-completeness of an $\mathcal{A}_{g_q}$-module $g_q X$ we have that
\[g_q \mathrm{mix}\,(\mathrm{Lin}\,(D, \mathcal{A})) \subset g_q \mathrm{mix}\,(X) = \mathrm{mix}\,(g_q X) = g_q X. \]
Hence, $g_q X = g_q \mathrm{mix}\,(\mathrm{Lin}\,(D, \mathcal{A}))$, which contradicts to (2).
\end{sloppypar}

Thus, for every nonzero $q\in\nabla$ there exists a nonzero idempotent $r \leq q$ such that for any $0 \ne g \in \nabla_r$ property $(3)$ holds.

Again by Theorem \ref{art9_teor_2_1}, we choose a partition $\{g_j\}_{j \in J}$ of the idempotent $r$ and a set $\{x_{g_j}\}_{j \in J}$ in $rX$, such that $s(x_{g_j}) = g_j$ and $l x_{g_j} \not\in \mathrm{Lin}\,(D, \mathcal{A})$ for all $0 \ne l \in \nabla_{g_i}$.

Since $rX$ is a $l$-complete $\mathcal{A}_r$-module, then there exists $x \in rX$ such that $g_j x = x_{g_j}$. In particular, $s(x) = r$, wherein $lx \not\in \mathrm{Lin}\,(D, \mathcal{A})$ for all $0 \ne l \in \nabla_r$.

Again by Theorem \ref{art9_teor_2_1} we choose a partition $\{r_k\}_{k \in K}$ of the unity $\mathbf{1}$ in the Boolean algebra $\nabla$ and a set $\{x_k\}_{k \in K}$ in $X$, such that $s(x_k) = r_k$ and $l x_k \not\in \mathrm{Lin}\,(D, \mathcal{A})$ for any $0 \ne l \in \nabla_{r_k}$. By $l$-completeness of the $\mathcal{A}$-module $X$ there exists $\hat{x} \in X$ such that $r_k \hat{x} = x_k$ for all $k \in K$. In this case $s(\hat{x}) = \mathbf{1}$ and $l \hat{x} \not\in \mathrm{Lin}\,(D, \mathcal{A})$ for any $0 \ne l \in \nabla$.

Show that the set $D \cup \{\hat{x}\}$ is $\mathcal{A}$-linearly independent. Let $a_0 \hat{x} + \sum\limits_{i=1}^n a_i x_i = 0$, where $a_0, a_i \in \mathcal{A}$, $x_i \in D$, $i=1, \ldots, n$. If $a_0 = 0$, then $\sum\limits_{i=1}^n a_i x_i = 0$ and by $\mathcal{A}$-linear independence of the set $D$ it follows that $a_i = 0$ for all $i=1, \ldots, n$. If $a_0 \ne 0$, then $s(a_0) \ne 0$ and for $i(a_0)=h \in \mathcal{A}$ we have that $h a_0 = s(a_0)$ and $s(a_0)\hat{x} = - \sum\limits_{i=1}^n a_i h x_i \in \mathrm{Lin}\,(D, \mathcal{A})$, which is not true. Hence, the set $D \cup \{\hat{x}\}$ is $\mathcal{A}$-linearly independent in $X$, which contradicts to maximality of the set $D$.

Thus the equality $e_0 = \mathbf{1}$ is impossible. This means that $e = \mathbf{1} - e_0 \ne 0$. By construction of the idempotent $e_0$, every nonzero idempotent $r \leq e$ does not have property $(1)$. Hence, for any $0 \ne r \in \nabla_e$ there exists a nonzero idempotent $p_r \leq r$ such that
\[ p_r X = p_r \mathrm{mix}\,(\mathrm{Lin}\,(D, \mathcal{A})) = \mathrm{mix}\,(\mathrm{Lin}\,(p_r D, \mathcal{A}_{p_r }))= p_r \mathrm{mix}\,(\mathrm{Lin}\,(e D, \mathcal{A}_{e})). \]
From Propositions \ref{art9_utv_2_6} $(iii)$ and \ref{art9_utv_2_7} $(ii)$ it follows that
\[ eX = \mathrm{mix}\,(\mathrm{Lin}\,(eD, \mathcal{A}_e)). \]
Since $eD$ is an $\mathcal{A}_e$-linearly independent subset in the $\mathcal{A}_e$-module $eX$, then $eD$ is an $\mathcal{A}_e$-basis in $eX$, i.e. $eX$ is a $\gamma$-homogeneous $\mathcal{A}_e$-module, where $\gamma = \mathrm{card}\,(eD)$. In particular, a cardinal number $\varkappa(p)$ is defined for all nonzero $p \in \nabla_e$. Let $\gamma_e$ be the smallest cardinal number in the set of cardinal numbers $\{\varkappa(p): 0 \ne p \leq e\}$, i.e. $\gamma_e = \varkappa(p)$ for some nonzero $p \leq e$. By the choice of the idempotent $p$ it follows that $\gamma_e = \varkappa(p) = \varkappa(q)$ for all $0 \ne q \in \nabla_p$. This means that the $\mathcal{A}_p$-module $X_p$ is strictly homogeneous.

\end{proof}

Now everything is ready to obtain the isomorphism from the faithful laterally complete $\mathcal{A}$-module to the Cartesian product of strictly homogeneous $\mathcal{A}$-modules.

\begin{theorem}\label{art9_teor_5_2}
Let $\mathcal{A}$ be a $l$-complete commutative regular algebra, let $\nabla$ be a Boolean algebra of all idempotents in $\mathcal{A}$ and let $X$ be a faithful $l$-complete $\mathcal{A}$-module. Then there exist a uniquely defined set of pairwise disjoint nonzero idempotents $\{e_i\}_{i \in I} \subset \nabla$ and a set of pairwise different cardinal numbers $\{\gamma_i\}_{i \in I}$ such that $\sup\limits_{i \in I}e_i = \mathbf{1}$ and $X_{e_i}$ is a strictly $\gamma_i$-homogeneous $\mathcal{A}_{e_i}$-module for all $i \in I$. In this case, the $\mathcal{A}$-modules $X$ and $\prod\limits_{i \in I} X_{e_i}$ are isomorphic.
\end{theorem}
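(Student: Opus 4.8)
The plan is to obtain the decomposition by an exhaustion argument built on Theorem~\ref{art9_teor_5_1}, then to regroup the pieces that share a common cardinal, and finally to read off both the isomorphism and the uniqueness from the invariant $\varkappa$.

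First I would produce a maximal disjoint family of strictly homogeneous pieces. Consider the collection of all families $\{(p_\alpha,\gamma_\alpha)\}_\alpha$ in which the $p_\alpha$ are pairwise disjoint nonzero idempotents and each $X_{p_\alpha}$ is a strictly $\gamma_\alpha$-homogeneous $\mathcal{A}_{p_\alpha}$-module, ordered by inclusion. The union of a chain is again such a family, so Zorn's lemma yields a maximal one; write $p_0=\sup_\alpha p_\alpha$. If $p_0\neq\mathbf{1}$, then $q=\mathbf{1}-p_0\neq 0$ and $X_q=qX$ is again a faithful $l$-complete $\mathcal{A}_q$-module, so Theorem~\ref{art9_teor_5_1} supplies a nonzero $p\leq q$ with $X_p$ strictly homogeneous; adjoining $p$ enlarges the family and contradicts maximality. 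Hence $\sup_\alpha p_\alpha=\mathbf{1}$.

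Next I would merge repeated cardinals. Let $\{\gamma_i\}_{i\in I}$ be the distinct values among the $\gamma_\alpha$, and set $e_i=\sup\{p_\alpha:\gamma_\alpha=\gamma_i\}$. Proposition~\ref{art9_utv_4_9a} (gluing) shows that $X_{e_i}$ is strictly $\gamma_i$-homogeneous. Using the distributive law $a\wedge\sup_k b_k=\sup_k(a\wedge b_k)$, valid in the complete Boolean algebra $\nabla$, one checks that $e_i\wedge e_{i'}=0$ for $i\neq i'$, since every pairwise meet $p_\alpha\wedge p_\beta$ vanishes, while $\sup_i e_i=\sup_\alpha p_\alpha=\mathbf{1}$. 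This produces the required partition of unity with pairwise different cardinals, and Proposition~\ref{art9_utv_2_8} then yields the isomorphism $X\cong\prod_{i\in I}X_{e_i}$.

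The main work is uniqueness, and I expect it to be the delicate step. Suppose $\{(e_i,\gamma_i)\}_{i\in I}$ and $\{(f_j,\lambda_j)\}_{j\in J}$ are two such decompositions. For $g=e_i\wedge f_j\neq 0$ the module $X_g=gX$, regarded over $\mathcal{A}_g$, is a nonzero restriction of both $X_{e_i}$ and $X_{f_j}$; since strict homogeneity passes to nonzero restrictions (the value of $\varkappa$ on any $l\leq g$ is computed from the same module $lX$), $X_g$ is simultaneously strictly $\gamma_i$-homogeneous and strictly $\lambda_j$-homogeneous. The crucial point is that $\varkappa_{X_g}(g)$ is a single well-defined cardinal (the least $\gamma$ for which $X_g$ is $\gamma$-homogeneous), and strictness forces $\gamma_i=\varkappa_{X_g}(g)=\lambda_j$; it is exactly here that \emph{strict} homogeneity, rather than mere homogeneity, is indispensable, since Theorem~\ref{art9_teor_3_9} is unavailable for a general $\nabla$. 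As the $\gamma_i$ are pairwise different and the $\lambda_j$ are pairwise different, for each fixed $i$ there is a \emph{unique} $j=j(i)$ with $e_i\wedge f_j\neq 0$; then $e_i=e_i\wedge\mathbf{1}=\sup_j(e_i\wedge f_j)=e_i\wedge f_{j(i)}$ gives $e_i\leq f_{j(i)}$, and the symmetric argument gives $f_{j(i)}\leq e_i$. Hence $e_i=f_{j(i)}$ and $\gamma_i=\lambda_{j(i)}$, so $i\mapsto j(i)$ is a bijection matching the two decompositions, which completes the proof.
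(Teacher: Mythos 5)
Your proposal is correct and follows essentially the same route as the paper: exhaust $\mathbf{1}$ by strictly homogeneous pieces via Theorem~\ref{art9_teor_5_1} (the paper invokes Theorem~\ref{art9_teor_2_1} where you use Zorn's lemma, an interchangeable step), glue equal cardinals by Proposition~\ref{art9_utv_4_9a}, obtain the isomorphism from Proposition~\ref{art9_utv_2_8}, and derive uniqueness from the invariant $\varkappa$ evaluated on the nonzero meets $e_i\wedge f_j$, exactly as in the paper's argument.
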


\begin{proof}
By Theorem \ref{art9_teor_5_1} for every nonzero idempotent $e\in\mathcal{A}$ there exists a nonzero idempotent $g \leq e$ such that $X_g$ is a strictly homogeneous $\mathcal{A}_g$-module. By Theorem \ref{art9_teor_2_1}, choose a set of pairwise disjoint nonzero idempotents $\{q_j\}_{j \in J}$ such that $\sup\limits_{j \in J}q_j = \mathbf{1}$ and $q_j X$ is a strictly $\lambda_j$-homogeneous $\mathcal{A}_{q_j}$-module for all $j \in J$. We decompose the set of cardinal numbers $A = \{\lambda_j\}_{j \in J}$ as a union of disjoint subsets $A_i$ in such a way that every $A_i$ consists of equal cardinal numbers from $A$. By $\gamma_i$ denote an element in $A_i$. By Proposition \ref{art9_utv_4_9a}, for $e_i = \sup\{q_j: \lambda_j \in A_i\}$ we have that the $\mathcal{A}_{e_i}$-module $X_{e_i}$ is strictly $\gamma_i$-homogeneous. Moreover, by Proposition \ref{art9_utv_2_8}, the $\mathcal{A}$-module $X$ and $\prod\limits_{i \in I}e_i X$ are isomorphic.

Assume, that there exist other sets of pairwise disjoint nonzero idempotents $\{g_j\}_{j \in J}$ and pairwise different cardinal numbers $\{\mu_j\}_{j \in J}$, such that $\sup\limits_{j \in J}g_j = \mathbf{1}$ and $X_{g_j}$ is a strictly $\mu_j$-homogeneous $\mathcal{A}_{g_j}$-module for all $j \in J$. For any fixed $j \in J$, by the equality $\sup\limits_{i \in I}e_i = \mathbf{1}$, we have that $g_j = \sup\limits_{i \in I}e_i g_j$. If there exist two different indexes $i_1, i_2 \in I$ such that $e_{i_1} g_j \ne 0$ and $e_{i_2} p_j \ne 0$, then
\[ \mu_j = \varkappa(g_j) = \varkappa(e_{i_1}g_j) = \varkappa(e_{i_1}) = \gamma_{i_1} \ne \gamma_{i_2} = \varkappa(e_{i_2}) = \varkappa(e_{i_2}g_j) = \mu_j. \]
By this contradiction, it follows that $e_i g_j = 0$ for all $i \in I$ except one index, which we denote by $i(j)$. Since $e_{i(j)} g_j \neq 0$, we have that
\[ \mu_j = \varkappa(g_j) = \varkappa(e_{i(j)} g_j) = \varkappa(e_{i(j)}) = \gamma_{i(j)}. \]
If $g_j \ne e_{i(j)}$, then by the equality $\sup\limits_{j \in J}g_j = \mathbf{1}$, there exists index $j_1 \in J$, $j_1 \ne j$ such that $e_{i(j)}g_{j_1} \ne 0$. Hence,
\[ \mu_j = \gamma_{i(j)} = \varkappa(e_{i(j)}) = \varkappa(e_{i(j)} g_{j_1}) = \varkappa(g_{j_1}) = \mu_{j_1}, \]
which is not true. Thus, $g_j = e_{i(j)}$ and $\mu_j = \gamma_{i(j)}$.

For the same reason, for any $i \in I$ there exists the unique index $j(i)$ such that $e_i = g_{j(i)}$ and $\gamma_i = \mu_{j(i)}$.

\end{proof}

The partition $\{e_i\}_{i \in I}$ of unity in a Boolean algebra of idempotents in $\mathcal{A}$ and the set of cardinal numbers $\{\gamma_i\}_{i \in I}$ in Theorem \ref{art9_teor_5_2} are called a passport for a faithful laterally complete $\mathcal{A}$-module $X$ and denoted by $\Gamma(X) = \{(e_i(X), \gamma_i(X))\}_{i \in I(X)}$.

Thus, a passport $\Gamma(X) = \{(e_i(X), \gamma_i(X))\}_{i \in I(X)}$ for a faithful $l$-complete $\mathcal{A}$-module $X$ means that $X = \prod\limits_{i \in I(X)}e_i(X) X$ (up to an isomorphism), where $e_i(X) X$ is a strictly $\gamma_i(X)$-homogeneous $\mathcal{A}_{e_i}$-module for all $i \in I(X)$, $e_i(X) \ne 0$, $e_i(X) e_j(X) = 0$, $\gamma_i(X) \ne \gamma_j(X)$, $i \ne j$, $i,j \in I(X)$, $\sup\limits_{i \in I(X)}e_i(X) = \mathbf{1}$.

The following theorem gives a criterion for isomorphism between faithful $l$-complete $\mathcal{A}$-modules, by using the notion of passport for these $\mathcal{A}$-modules.

\begin{theorem}\label{art9_teor_5_3}
Let $\mathcal{A}$ be a $l$-complete commutative regular algebra, $X$ and $Y$ be a faithful $l$-complete $\mathcal{A}$-modules. The following conditions are equivalent:

(i) $\Gamma(X) = \Gamma(Y)$;

(ii) $\mathcal{A}$-modules $X$ and $Y$ are isomorphic.
\end{theorem}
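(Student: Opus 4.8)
The plan is to prove the equivalence by establishing both directions, with the forward direction $(i) \Rightarrow (ii)$ being essentially a matter of gluing isomorphisms and the reverse direction $(ii) \Rightarrow (i)$ reducing to the uniqueness already secured in Theorem \ref{art9_teor_5_2}. First I would handle $(i) \Rightarrow (ii)$. Suppose $\Gamma(X) = \Gamma(Y)$, meaning there is a common index set $I$, a common partition of unity $\{e_i\}_{i \in I}$, and common cardinals $\{\gamma_i\}_{i \in I}$ such that both $X_{e_i}$ and $Y_{e_i}$ are strictly $\gamma_i$-homogeneous $\mathcal{A}_{e_i}$-modules. Since strict $\gamma_i$-homogeneity implies $\gamma_i$-homogeneity, Proposition \ref{art9_teor_3_13a} furnishes, for each $i \in I$, an $\mathcal{A}_{e_i}$-module isomorphism $U_i : X_{e_i} \to Y_{e_i}$.

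The next step is to assemble these local isomorphisms into a global one. By Proposition \ref{art9_utv_2_8}, $X$ is isomorphic to $\prod_{i \in I} X_{e_i}$ and $Y$ is isomorphic to $\prod_{i \in I} Y_{e_i}$. I would define $U : \prod_{i \in I} X_{e_i} \to \prod_{i \in I} Y_{e_i}$ coordinate-wise by $U(\{x_i\}_{i \in I}) = \{U_i(x_i)\}_{i \in I}$. Since each $U_i$ is an $\mathcal{A}_{e_i}$-module isomorphism and the algebraic operations on the Cartesian products are coordinate-wise, $U$ is an $\mathcal{A}$-module isomorphism; I would verify additivity, $\mathcal{A}$-homogeneity, and bijectivity directly from the corresponding properties of each $U_i$. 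Composing with the isomorphisms of Proposition \ref{art9_utv_2_8} yields an isomorphism $X \cong Y$. This direction should be routine, the only care being to note that $U$ respects the full $\mathcal{A}$-action (not merely each $\mathcal{A}_{e_i}$-action), which follows because $a \cdot \{x_i\} = \{a e_i \cdot x_i\}$ and $U_i$ is $\mathcal{A}_{e_i}$-linear.

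For $(ii) \Rightarrow (i)$, suppose $V : X \to Y$ is an $\mathcal{A}$-module isomorphism. Let $\Gamma(X) = \{(e_i, \gamma_i)\}_{i \in I}$ be the passport of $X$ from Theorem \ref{art9_teor_5_2}, so $X_{e_i}$ is strictly $\gamma_i$-homogeneous. The key observation is that $V$ restricts, for each nonzero $e \in \nabla$, to an $\mathcal{A}_e$-isomorphism $V|_{eX} : X_e \to Y_e$, since $V(ex) = e V(x)$ shows $V(eX) = eY$. In particular $V$ carries $X_{e_i}$ isomorphically onto $Y_{e_i}$, and by Proposition \ref{art9_utv_2_6} $(iv)$ together with the remark that isomorphic images of strictly $\gamma$-homogeneous modules are strictly $\gamma$-homogeneous, each $Y_{e_i}$ is a strictly $\gamma_i$-homogeneous $\mathcal{A}_{e_i}$-module. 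Thus $\{(e_i, \gamma_i)\}_{i \in I}$ is a system of pairwise disjoint idempotents summing to $\mathbf{1}$, with pairwise distinct cardinals, such that each $Y_{e_i}$ is strictly $\gamma_i$-homogeneous. By the uniqueness clause of Theorem \ref{art9_teor_5_2} applied to $Y$, this system must coincide with the passport $\Gamma(Y)$, whence $\Gamma(X) = \Gamma(Y)$.

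I expect the main conceptual obstacle to lie not in either direction's mechanics but in cleanly transferring strict homogeneity across $V$: one must confirm that $V|_{eX}$ is genuinely an $\mathcal{A}_e$-module isomorphism and that the invariant $\varkappa$ is preserved, i.e. $\varkappa_X(e) = \varkappa_Y(e)$ for all nonzero $e \leq e_i$. This is exactly what Proposition \ref{art9_utv_2_6} $(iv)$ guarantees, since homogeneity degree is an isomorphism invariant, so strict $\gamma_i$-homogeneity (the condition $\gamma_i = \varkappa(q)$ for all nonzero $q \leq e_i$) transports verbatim. Once that transfer is in hand, the entire weight of the reverse direction rests on the uniqueness already proved in Theorem \ref{art9_teor_5_2}, so the argument is short. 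The forward direction is purely a gluing construction and presents no real difficulty.
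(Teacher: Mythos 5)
Your proposal is correct and follows essentially the same route as the paper: the forward direction glues the local isomorphisms from Proposition \ref{art9_teor_3_13a} through the Cartesian-product decomposition of Proposition \ref{art9_utv_2_8} (the paper writes this as $\Phi(x)=V^{-1}(\{U_i(e_ix)\}_{i\in I})$), and the reverse direction transports strict $\gamma_i$-homogeneity along the isomorphism via Proposition \ref{art9_utv_2_6} $(iv)$ and invokes the uniqueness clause of Theorem \ref{art9_teor_5_2}. No substantive differences.
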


\begin{proof}
\begin{sloppypar}
$(i) \Rightarrow (ii)$. Let $\{(e_i(X), \gamma_i(X))\}_{i \in I(X)} = \Gamma(X) = \Gamma(Y) = \{(e_i(Y), \gamma_i(Y))\}_{i \in I(Y)}$, i.e. $I(X) = I(Y) := I$, $e_i(X) = e_i(Y) := e_i$ and $\gamma_i(X) = \gamma_i(Y) := \gamma_i$ for all $i \in I$. By Theorem \ref{art9_teor_5_2}, there exists an isomorphism $U$ from $\mathcal{A}$-module $X$ onto $\mathcal{A}$-module $\prod\limits_{i \in I}e_i X$ (respectively an isomorphism $V$ from $\mathcal{A}$-module $Y$ onto $\mathcal{A}$-module $\prod\limits_{i \in I}e_i Y$), where $U(x) = \{e_i x\}_{i \in I}$ (respectively, $V(y) = \{e_i y\}_{i \in I}$) for every $x \in X$ (respectively, for every $y \in Y$).

Since $e_i X$ (respectively, $e_i Y$) is a strictly $\gamma_i$-homogeneous $\mathcal{A}_{e_i}$-module, then by Proposition \ref{art9_teor_3_13a}, for all $i \in I$ there exists an isomorphism $U_i$ from the $\mathcal{A}_{e_i}$-module $e_i X$ onto the $\mathcal{A}_{e_i}$-module $e_i Y$. It is clear that a map $\Phi: X \rightarrow Y$, defined by the equality
\[ \Phi(x) = V^{-1}(\{U_i(e_i x)\}_{i \in I}). \]
is an isomorphism from the $\mathcal{A}$-module $X$ onto the $\mathcal{A}$-module $Y$.

$(ii) \Rightarrow (i)$. Let $\Psi$ be an isomorphism from $X$ onto $Y$ and $\Gamma(X) = \{(e_i(X), \gamma_i(X))\}_{i \in I(X)}$ be a passport for a faithful $l$-complete $\mathcal{A}$-module $X$. By Proposition \ref{art9_utv_2_6} $(iv)$, the following $\mathcal{A}_{e_i(X)}$-module
\[ Y_i = \Psi(e_i(X)X) = e_i(X) \Psi(X) = e_i(X) Y \]
is strictly $\gamma_i(X)$-homogeneous. This means that $\{(e_i(X), \gamma_i(X))\}_{i \in I}$ is a passport for the faithful $l$-complete $\mathcal{A}$-module $Y$, i.e. $\Gamma(X) = \Gamma(Y)$.

\end{sloppypar}
\end{proof}

Let $\mathcal{A}$ be a $l$-complete commutative regular algebra, let $\nabla$ be a Boolean algebra of all idempotents in $\mathcal{A}$. A faithful $l$-complete $\mathcal{A}$-module $X$ is called finitely-dimensional, if there exist a finite partition $\{e_i\}_{i=1}^k$ of unity in the Boolean algebra $\nabla$ ($e_i \neq 0, i=1, \ldots, k)$ and a finite set $\{n_i\}_{i=1}^k$ of natural numbers ($n_1 < n_2 < \ldots < n_k$) such that $X_{e_i}$ is an $n_i$-homogeneous $\mathcal{A}_{e_i}$-module for all $i = 1, \ldots, k$.

This means that any finitely-dimensional $\mathcal{A}$-module $X$ has a passport of the following form
\[ \Gamma(X) = \{(e_i(X), n_i(X))\}_{i=1}^k, \]
where
\[ e_1(X) + \ldots + e_k(X) = \mathbf{1}, n_1(X) < \ldots < n_k(X) < \infty. \]

\begin{theorem}\label{art9_teor_5_4}
For a faithful $l$-complete $\mathcal{A}$-module $X$ the following conditions are equivalent:

(i). $X$ is a finitely-dimensional module;

(ii). $X$ is a finitely-generated module, i.e. there exists a finite set $\{x_i\}_{i=1}^m$ of elements in $X$ such that $X = \mathrm{Lin}(\{x_i\}_{i=1}^m, \mathcal{A})$;

(iii). There exists a positive integer $m$ such that for any nonzero idempotent $e\in\mathcal{A}$ any $\mathcal{A}_e$-linearly independent set in $X_e$ consists of not more than $m$ elements.
\end{theorem}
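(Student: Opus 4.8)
The plan is to prove the cyclic chain of implications $(i)\Rightarrow(ii)\Rightarrow(iii)\Rightarrow(i)$. The two ``forward'' implications are fairly direct consequences of the structure theory already developed in Sections~3 and~4, while the closing implication $(iii)\Rightarrow(i)$ carries the real content, and it is there that the passport decomposition of Theorem~\ref{art9_teor_5_2} is the essential tool.

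For $(i)\Rightarrow(ii)$, I would use Corollary~\ref{art9_teor_3_14} to fix, for each $i\in\{1,\dots,k\}$, an $\mathcal{A}_{e_i}$-basis $\{x^{(i)}_1,\dots,x^{(i)}_{n_i}\}$ of the $n_i$-homogeneous module $X_{e_i}$. Since $\{e_i\}_{i=1}^k$ is a \emph{finite} partition of unity, mixing over it reduces to finite summation, so for each $j\in\{1,\dots,n_k\}$ I set $z_j=\sum_{i:\,n_i\ge j}e_i x^{(i)}_j$, noting $e_i z_j=e_i x^{(i)}_j$ for $j\le n_i$. Writing an arbitrary $x\in X$ as $x=\sum_{i=1}^k e_i x$ and expanding each $e_i x\in X_{e_i}$ in the chosen basis then shows $x\in\mathrm{Lin}(\{z_1,\dots,z_{n_k}\},\mathcal{A})$, so $X$ is finitely generated by $m=n_k$ elements. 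For $(ii)\Rightarrow(iii)$, suppose $X=\mathrm{Lin}(\{x_1,\dots,x_m\},\mathcal{A})$. Then for any nonzero $e\in\nabla$ one has $X_e=\mathrm{Lin}(\{ex_1,\dots,ex_m\},\mathcal{A}_e)$, and if $\{y_1,\dots,y_p\}\subset X_e$ is $\mathcal{A}_e$-linearly independent, then, taking $z_i=x_i$ and observing $ey_j=y_j$, Lemma~\ref{art9_lemma_3_3} yields $p\le m$; thus $(iii)$ holds with the same $m$.

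The heart of the argument is $(iii)\Rightarrow(i)$. Here I would invoke Theorem~\ref{art9_teor_5_2} to obtain the passport $\Gamma(X)=\{(e_i,\gamma_i)\}_{i\in I}$, so that $\sup_{i\in I}e_i=\mathbf{1}$, the cardinals $\gamma_i$ are pairwise different, and each $X_{e_i}$ is strictly $\gamma_i$-homogeneous. The bound from $(iii)$ is then leveraged twice. First, each $\gamma_i$ is finite and in fact $\gamma_i\le m$: an $\mathcal{A}_{e_i}$-Hamel basis of $X_{e_i}$ is an $\mathcal{A}_{e_i}$-linearly independent subset of $X_{e_i}$, so were $\gamma_i>m$ (in particular, were $\gamma_i$ infinite) one could extract $m+1$ such elements, contradicting $(iii)$ applied with $e=e_i$. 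Second, because the $\gamma_i$ are pairwise distinct natural numbers lying in $\{1,\dots,m\}$ (each is nonzero, since $X_{e_i}$ is faithful and nonzero), the index set $I$ has at most $m$ elements, hence is finite. Reindexing so that $\gamma_1<\dots<\gamma_k$ and setting $n_i=\gamma_i$ exhibits the finite partition $\{e_i\}_{i=1}^k$ of unity together with the naturals $n_1<\dots<n_k$ for which $X_{e_i}$ is $n_i$-homogeneous, which is exactly condition $(i)$.

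The main obstacle is precisely this closing implication $(iii)\Rightarrow(i)$: one must pass from the purely dimensional bound on the size of linearly independent sets to the \emph{finiteness of the passport}, both in the number of homogeneous components and in the homogeneity cardinals. This step is only feasible because Theorem~\ref{art9_teor_5_2} already guarantees a canonical decomposition into strictly homogeneous pieces indexed by pairwise different cardinals; the uniform bound of $(iii)$ then simultaneously caps each cardinal by $m$ and, through their pairwise distinctness, caps their total number by $m$ as well.
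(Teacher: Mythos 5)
Your proposal is correct and follows essentially the same route as the paper: the same use of the $\mathcal{A}_{e_i}$-bases for $(i)\Rightarrow(ii)$, the same appeal to Lemma \ref{art9_lemma_3_3} for $(ii)\Rightarrow(iii)$, and the same passport argument via Theorem \ref{art9_teor_5_2} for $(iii)\Rightarrow(i)$. The only (immaterial) difference is that in $(i)\Rightarrow(ii)$ you combine the bases across the finite partition to get $n_k$ generators, whereas the paper simply takes all $\sum_i n_i$ basis elements as the generating set.
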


\begin{proof}
$(i)\Rightarrow(ii)$. Let $\Gamma(X) = \{(e_i(X), n_i(X))\}_{i=1}^k$ be a passport for the $\mathcal{A}$-module $X$. For every $i = 1, \ldots, k$ we choose the $\mathcal{A}_{e_i}$-basis $\{x_j^{(i)}\}_{j=1}^{n_i}$ in $X_{e_i}$. If $x \in X$, then $e_i x = \sum\limits_{j=1}^{n_i} a_j^{(i)}x_j^{(i)}$, where $a_j^{(i)} \in \mathcal{A}_{e_i}$. Hence,
\[ x = \sum\limits_{i=1}^k e_i x = \sum\limits_{i=1}^k\sum\limits_{j=1}^{n_i} a_j^{(i)}g_j^{(i)} \in \mathrm{Lin}\ (\{x_j^{(i)}\}_{j=\overline{1,n_i}, i=\overline{1,k}}, \mathcal{A}). \]
This means that $\mathcal{A}$-module $X$ is finitely-generated.

$(ii)\Rightarrow(iii)$. If $X = \mathrm{Lin}(\{x_i\}_{i=1}^m, \mathcal{A})$, $e$ is a nonzero idempotent in $\mathcal{A}$ and $\{y_j\}_{j=1}^l$ is an $\mathcal{A}_e$-linearly independent set in $X_e$, then by Lemma \ref{art9_lemma_3_3}, it follows that $l \leq m$.

$(iii)\Rightarrow(i)$. By Theorem \ref{art9_teor_5_2}, there exist a set of pairwise disjoint nonzero idempotents $\{e_i\}_{i \in I}$ and a set of pairwise different cardinal numbers $\{\gamma_i\}_{i \in I}$ such that $\sup\limits_{i \in I}e_i = \mathbf{1}$ and $X_{e_i}$ is a strictly $\gamma_i$-homogeneous $\mathcal{A}_{e_i}$-module for all $i \in I$. If $\gamma_i > m$, then in $X_{e_i}$ there exists a finite set $\{x_i\}_{i=1}^l$, which consist of $\mathcal{A}_e$-linearly independent elements, and besides $l > m$, which contradicts to condition $(iii)$. Hence, $\gamma_i \leq m$ for all $i \in I$. Since natural numbers $\{\gamma_i\}_{i \in I}$ are pairwise different, then $I$ is a finite set, i.e. $\{\gamma_i\}_{i \in I} = \{n_i\}_{i=1}^k$, where $n_1 < n_2 < \ldots n_k$. Hence, the $\mathcal{A}$-module $X$ is finitely-dimensional.

\end{proof}

The following description of finitely-dimensional $\mathcal{A}$-modules follows directly from Theorem \ref{art9_teor_5_2} and Corollary \ref{art9_teor_3_14}.

\begin{corollary}\label{art9_teor_4_5}
If $X$ is a finitely-dimensional $\mathcal{A}$-module, then there exist an uniquely defined finite partition $\{e_i\}_{i=1}^k$ of unity in the Boolean algebra of all idempotents in $\mathcal{A}$ and a finite set of positive integers $n_1 < \ldots < n_k$ such that the $\mathcal{A}$-module $X$ is isomorphic to the $\mathcal{A}$-module $\prod\limits_{i=1}^k \mathcal{A}^{n_i}_{e_i}$ (here $e_i \ne 0$ for all $i = 1, \ldots, k$).
\end{corollary}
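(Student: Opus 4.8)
The plan is to read the statement off from the passport supplied by Theorem \ref{art9_teor_5_2}, together with the identification of $n$-homogeneous modules furnished by Corollary \ref{art9_teor_3_14}. First I would record what finite-dimensionality gives: by definition (equivalently, by Theorem \ref{art9_teor_5_4}), $X$ carries a passport $\Gamma(X) = \{(e_i, n_i)\}_{i=1}^k$ in which $\{e_i\}_{i=1}^k$ is a finite partition of $\mathbf{1}$ by nonzero idempotents and $n_1 < \cdots < n_k$ are positive integers, with each $X_{e_i}$ an $n_i$-homogeneous $\mathcal{A}_{e_i}$-module. Since the $n_i$ are finite, Lemma \ref{art9_lemma_3_3} upgrades each $X_{e_i}$ to a strictly $n_i$-homogeneous module, so $\{(e_i, n_i)\}_{i=1}^k$ is genuinely a passport in the sense of Theorem \ref{art9_teor_5_2}.

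Next I would invoke Theorem \ref{art9_teor_5_2} in two ways. For the isomorphism, it yields $X \cong \prod_{i=1}^k X_{e_i}$ via the map $x \mapsto \{e_i x\}_{i=1}^k$. For uniqueness, the same theorem asserts that the partition $\{e_i\}_{i=1}^k$ together with its attached cardinals $\{n_i\}_{i=1}^k$ is uniquely determined by $X$; restricted to the finite-dimensional case this is exactly the asserted uniqueness of $\{e_i\}_{i=1}^k$ and of the integers $n_1 < \cdots < n_k$.

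It then remains to identify each factor. Here I would apply Corollary \ref{art9_teor_3_14} not to $\mathcal{A}$ itself but to the relativized algebra $\mathcal{A}_{e_i} = e_i \mathcal{A}$, which is again an $l$-complete commutative regular algebra with unit $e_i$ and Boolean algebra of idempotents $\nabla_{e_i}$. Applied there, the corollary gives $X_{e_i} \cong \mathcal{A}_{e_i}^{n_i}$, since $X_{e_i}$ is $n_i$-homogeneous over $\mathcal{A}_{e_i}$. Assembling these isomorphisms coordinate-wise produces $\prod_{i=1}^k X_{e_i} \cong \prod_{i=1}^k \mathcal{A}_{e_i}^{n_i}$, and composing with $X \cong \prod_{i=1}^k X_{e_i}$ completes the construction.

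The argument is essentially bookkeeping, so I do not expect a genuine obstacle; the one point that deserves explicit verification is that Corollary \ref{art9_teor_3_14} legitimately relativizes to each $\mathcal{A}_{e_i}$, that is, that $\mathcal{A}_{e_i}$ inherits commutativity, regularity and lateral completeness, with $e_i$ playing the role of $\mathbf{1}$. Once this is granted, the coordinate-wise assembly of the module isomorphisms and the transfer of uniqueness from Theorem \ref{art9_teor_5_2} are routine.
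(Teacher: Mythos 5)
Your proposal is correct and follows exactly the route the paper intends: the paper gives no written proof beyond stating that the corollary "follows directly from Theorem \ref{art9_teor_5_2} and Corollary \ref{art9_teor_3_14}", and your argument is a careful unpacking of precisely that, including the worthwhile observation (via Lemma \ref{art9_lemma_3_3}) that finite homogeneity is automatically strict, so the given decomposition is the passport and uniqueness transfers.
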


A faithful $l$-complete $\mathcal{A}$-module $X$ is called $\sigma$-finitely-dimensional, if there exist a countable partition $\{e_i\}_{i=1}^\infty$ of unity in the Boolean algebra of all idempotents in $\mathcal{A}$ ($e_i \neq 0, i=1,2,\ldots$) and a countable set $\{n_i\}_{i=1}^\infty$ of positive integers ($n_1 < n_2 < \ldots $) such that $X_{e_i}$ is an $n_i$-homogeneous $\mathcal{A}_{e_i}$-module for all $i = 1, 2, \ldots$

By Theorem \ref{art9_teor_5_2} and Corollary \ref{art9_teor_3_14} we obtain the following description of $\sigma$-finitely-dimensional $\mathcal{A}$-modules.

\begin{corollary}\label{art9_teor_4_6}
If $X$ is a $\sigma$-finitely-dimensional $\mathcal{A}$-module, then there exist a uniquely defined countable partition $\{e_i\}_{i=1}^\infty$ of unity in the Boolean algebra of all idempotents in $\mathcal{A}$ and a countable set of positive integers $n_1 < n_2 < \ldots $ such that the $\mathcal{A}$-module $X$ is isomorphic to the $\mathcal{A}$-module $\prod\limits_{i=1}^\infty \mathcal{A}^{n_i}_{e_i}$ (here $e_i \ne 0$ for all $i = 1, 2, \ldots$).
\end{corollary}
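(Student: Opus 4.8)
The plan is to read the conclusion off directly from the classification Theorem \ref{art9_teor_5_2}, exactly as Corollary \ref{art9_teor_4_5} is obtained in the finite case, the only difference being that the index set is now countably infinite. First I would observe that the defining data of a $\sigma$-finitely-dimensional module is already a candidate for its passport. Indeed, by hypothesis there is a countable partition $\{e_i\}_{i=1}^\infty$ of unity with $e_i \neq 0$ and strictly increasing positive integers $n_1 < n_2 < \ldots$ such that $X_{e_i}$ is an $n_i$-homogeneous $\mathcal{A}_{e_i}$-module. Since the $n_i$ are strictly increasing they are pairwise different, and since $\{e_i\}_{i=1}^\infty$ is a countable partition we have $\sup_i e_i = \mathbf{1}$ together with $e_i e_j = 0$ for $i \neq j$.

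The point that makes these data a genuine passport is that each $X_{e_i}$ is not merely $n_i$-homogeneous but \emph{strictly} $n_i$-homogeneous: this is precisely the remark following Lemma \ref{art9_lemma_3_3}, that every $n$-homogeneous module is strictly $n$-homogeneous. Hence $\{(e_i, n_i)\}_{i=1}^\infty$ satisfies all the requirements of the family produced by Theorem \ref{art9_teor_5_2}, so by the uniqueness assertion of that theorem the partition $\{e_i\}_{i=1}^\infty$ and the integers $\{n_i\}_{i=1}^\infty$ are uniquely determined by $X$.

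It then remains to identify each homogeneous piece. Applying Corollary \ref{art9_teor_3_14} to the algebra $\mathcal{A}_{e_i}$, the $n_i$-homogeneous $\mathcal{A}_{e_i}$-module $X_{e_i}$ is isomorphic to $\mathcal{A}^{n_i}_{e_i}$; I would fix such an isomorphism $U_i$ for each $i$. Theorem \ref{art9_teor_5_2} supplies an isomorphism $X \cong \prod_{i=1}^\infty X_{e_i}$, and composing it with the coordinatewise map $\{y_i\}_i \mapsto \{U_i(y_i)\}_i$ yields the desired isomorphism $X \cong \prod_{i=1}^\infty \mathcal{A}^{n_i}_{e_i}$.

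I do not anticipate a genuine obstacle here, since the statement is a specialization of Theorem \ref{art9_teor_5_2} in which every cardinal $\gamma_i$ happens to be a finite number $n_i$. The single point that has to be checked, rather than merely quoted, is that the defining data of a $\sigma$-finitely-dimensional module really is its passport, and this rests entirely on the implication ``$n$-homogeneous $\Rightarrow$ strictly $n$-homogeneous'' recorded after Lemma \ref{art9_lemma_3_3}. Everything else is the finite-dimensional argument of Corollary \ref{art9_teor_4_5} carried over verbatim to a countable index set.
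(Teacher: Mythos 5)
Your proposal is correct and follows exactly the route the paper indicates: the paper derives this corollary directly from Theorem \ref{art9_teor_5_2} and Corollary \ref{art9_teor_3_14}, and your verification that the defining data $\{(e_i,n_i)\}_{i=1}^\infty$ is the passport (via the remark that $n$-homogeneous implies strictly $n$-homogeneous) is precisely the detail the paper leaves implicit. No gaps.
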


National University of Uzbekistan, 100174, Vuzgorodok, Tashkent, Uzbekistan;

e-mail: chilin@ucd.uz, vladimirchil@gmail.com;
        
        karimovja@mail.ru

\end{document}